\title{Subgroups of Bestvina-Brady groups}
\DeclareMathOperator{\cd}{cd}
\DeclareFontFamily{U}{wncy}{}
\DeclareFontShape{U}{wncy}{m}{n}{<->wncyr10}{}
\DeclareSymbolFont{mcy}{U}{wncy}{m}{n}
\DeclareMathSymbol{\Sha}{\mathord}{mcy}{"58}
\DeclareMathSymbol{\sha}{\mathord}{mcy}{"78}
\newcommand{\K}{\mathbb{K}}
\newcommand{\F}{\mathbb{F}}
\newcommand{\Z}{\mathbb{Z}}
\newcommand{\mc}[1]{\mathscr{#1}}
\newcommand{\bu}{\bullet}
\newcommand{\pres}[2]{\left\langle{#1}\, \big\vert\, {#2}\right\rangle}
\newcommand{\li}{{\mc L}}
\newcommand{\bbu}{{\bu,\bu}}
\newcommand{\ext}{\operatorname{Ext}}
\newcommand{\argu}{\hbox to 1.5ex{\hrulefill}}
\newcommand{\gr}{\operatorname{gr}}
\newcommand{\gen}[1]{\langle{#1}\rangle}
\newcommand{\mf}[1]{\mathfrak{#1}}
\newcommand{\abs}[1]{\vert{#1}\vert}
\newcommand{\hnn}{\operatorname{HNN}}
\author{S. Blumer}
\address{Fakultät für Mathematik, Universität Wien, Oskar-Morgenstern-Platz 1, 1090
	Wien, Austria}
\email{simone.blumer@univie.ac.at}
\date{\today}
\begin{document}
{	\maketitle
	
	
	
	\newtheorem{thm}{Theorem}[section]
	\newtheorem*{thmA}{Theorem A}
	\newtheorem*{thmB}{Theorem B}
	\newtheorem*{thm*}{Theorem}
	\newtheorem{cor}[thm]{Corollary}
	\newtheorem{lem}[thm]{Lemma}
	\newtheorem{prop}[thm]{Proposition}
	\newtheorem{defin}[thm]{Definition}
	\theoremstyle{definition}
	\newtheorem{exam}[thm]{Example}
	\theoremstyle{definition}
	\newtheorem{examples}[thm]{Examples}
	\newtheorem{rem}[thm]{Remark}
	\newtheorem{case}{\sl Case}
	
	\newtheorem{claim}{Claim}
	\newtheorem{fact}[thm]{Fact}
	\newtheorem{question}[thm]{Question}
	\newtheorem*{questionss}{Questions}
	\newtheorem{conj}[thm]{Conjecture}
	\newtheorem*{notation}{Notation}
	\swapnumbers
	\newtheorem{rems}[thm]{Remarks}
	
	\theoremstyle{definition}
	\newtheorem*{acknowledgment}{Acknowledgment}

	\numberwithin{equation}{section}

	}

	\begin{abstract}
		In \cite{droms}, Droms proved that all the subgroups of a right-angled Artin group (RAAG) defined by a finite simplicial graph $\Gamma$ are themselves RAAGs if, and only if, $\Gamma$ has no induced square graph nor line-graph of length $3$. The present work provides a similar result for specific normal subgroups of RAAGs, called Bestvina-Brady groups: We characterize those graphs in which every subgroup of such a group is itself a RAAG. In turn, we confirm several Galois theoretic conjectures for the pro-$p$ completions of these groups.
	\end{abstract}
	
	\section*{Introduction}
Let $\Gamma$ be a simplicial graph, i.e., a pair $(V,E)$, where $V$ is a set and $E$ is a set of $2$-element subsets of $V$. The associated \textbf{right-angled Artin group} $G_\Gamma$ (RAAG, for short), or graph group, is the group generated by the vertices of the graph, with a defining relation $vw=wv$ for each pair of vertices $v,w$ joined by an edge in $\Gamma$:
 \begin{align}\label{eq:RAAG}
	\pres{v:\ v\in V}{[v,w]:\ \{v,w\}\in E}
\end{align} 
A group $G$ is said to be a RAAG if it is isomorphic to some $G_\Gamma$. In that case, $\Gamma$ is determined by $G$ (see \hspace{1pt}\cite{kimroush}), and is called the defining graph of $G$. RAAGs have intensively been studied for they provide a rich class of groups whose algebraic properties can be derived from those of the defining graph. 
Such groups sit in between the two extremal cases of free groups and free abelian groups, and they can thus be seen as interpolating those classes. It is then natural to ask which properties RAAGs share with free/free abelian groups.

	For instance, it is well known that subgroups of free and free abelian groups are of the same type.
	Nevertheless, the subgroups of a RAAG might not be RAAGs themselves: depending on the defining graph, there may exist finitely generated subgroups that do not even admit a finite number of defining relation, that is to say, the RAAG is not coherent (see Prop. \ref{prop:cohe}). 
	
	This problem was studied by Droms \cite{dromschordal}, who proved a finitely generated RAAG is coherent if, and only if, the defining graph is \textbf{chordal}, i.e., it does not contain any induced cycle of length $\geq 4$.
	
	On the other hand, Droms \cite{droms} also gave a complete classification of finite graphs $\Gamma$ such that all subgroups of $G_\Gamma$ are RAAGs. 
	\begin{thm}[Droms]\label{thm:droms}
		Let $\Gamma$ be a finite graph. Then, every subgroup of $G_\Gamma$ is a RAAG if, and only if, no induced subgraph of $\Gamma$  is either a square or a line-graph of length $3$.
	\end{thm}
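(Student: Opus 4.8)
\emph{Overall shape of the argument.} I would prove the two implications separately. The ``if'' direction calls for a structural induction, starting from the classical description of finite graphs with no induced $C_4$ and no induced $P_4$ (the \emph{trivially perfect} graphs): such a graph is obtained from the one-vertex graph by iterating the two operations ``disjoint union'' and ``cone off'' (adjoin a vertex adjacent to all existing vertices); equivalently, every connected such graph on at least two vertices has a universal vertex — which one checks by taking a vertex $v$ of maximal degree and observing that a non-neighbour of $v$, joined to $v$ by a shortest path and combined with a suitable neighbour of $v$, would otherwise produce an induced $P_4$ or $C_4$. On the group level this says that $G_\Gamma$ is built from $\Z$ by iterated free products and by iterated operations $G\mapsto\Z\times G$. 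Hence it suffices to prove that the class $\mathcal{C}$ of groups all of whose subgroups are RAAGs — obviously closed under passing to subgroups — contains $\Z$ and is closed under these two operations.

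\emph{The two closure properties.} Closure under free products is the Kurosh subgroup theorem: a subgroup of $G_1\ast G_2$ is a free product of a free group with conjugates of subgroups of $G_1$ and of $G_2$, each a RAAG by hypothesis, and an arbitrary free product of RAAGs is the RAAG of the disjoint union of the defining graphs. The substantive case is closure under $G\mapsto\Z\times G$ with $G\in\mathcal{C}$. Let $Z=\Z\times 1$ be the central factor, $\pi\colon\Z\times G\to G$ and $\rho\colon\Z\times G\to\Z$ the projections, and $H\le\Z\times G$. If $H\cap Z=1$, then $\pi$ is injective on $H$, so $H$ embeds in $G$ and is a RAAG; so assume $H\cap Z\cong\Z$. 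Put $\bar H=\pi(H)\le G$ and $N=\ker(\rho|_H)\trianglelefteq H$ (a subgroup of $G$), both RAAGs since $G\in\mathcal{C}$. If $\rho(H)=0$ then $H=N$ and we are done; otherwise pick $t=(k,g)\in H$ with $\rho(t)$ a generator of $\rho(H)\cong\Z$, so that $\bar H=\langle N,g\rangle$ with $N\trianglelefteq\bar H$ and cyclic quotient. A short check shows that if $\bar H/N$ were infinite cyclic then in fact $H\cap Z=1$, against our assumption; so $\bar H/N$ is finite of some order $m$, and $g^m\in N$. Then the index-$m$ subgroup $\{h\in H:\pi(h)\in N\}$ equals $\langle N,t^m\rangle=\langle N,(mk,1)\rangle=N\times(H\cap Z)$, i.e.\ the central extension $1\to H\cap Z\to H\xrightarrow{\ \pi\ }\bar H\to 1$ splits after restriction to the finite-index subgroup $N\le\bar H$. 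Hence its class $\xi\in H^2(\bar H;\Z)$ satisfies $\res(\xi)=0$, so $m\xi=\operatorname{cor}(\res(\xi))=0$ by the transfer; but the integral cohomology of a RAAG is free abelian, so $H^2(\bar H;\Z)$ is torsion-free and $\xi=0$. Therefore $H\cong\Z\times\bar H$, the RAAG of the defining graph of $\bar H$ with one extra universal vertex. This closes the induction.

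\emph{Necessity.} Suppose $\Gamma$ has an induced $C_4$ or $P_4$. The full subgraph on its vertices gives a retract subgroup $G_{\Gamma'}\le G_\Gamma$ with $\Gamma'\in\{C_4,P_4\}$, so it is enough to exhibit a finitely generated subgroup of $G_{\Gamma'}$ that is not a RAAG. For $\Gamma'=C_4$ one has $G_{C_4}\cong F_2\times F_2$, and the Bestvina--Brady group $\ker(G_{C_4}\to\Z)$ (each vertex generator $\mapsto1$) is finitely generated but not finitely presented, by the theorem of Stallings and Bieri — the flag complex of $C_4$ being a circle, hence connected but not simply connected; since a RAAG on a finite graph is finitely presented, and a finitely generated group is never an infinitely generated RAAG (its abelianization would fail to be finitely generated), this group is not a RAAG. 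For $\Gamma'=P_4$ one must instead construct a finitely generated subgroup of $G_{P_4}$ and verify directly that it is not isomorphic to any RAAG; here, because $P_4$ is chordal, $G_{P_4}$ is coherent (Prop.~\ref{prop:cohe}), so the subgroup is automatically finitely presented and the failure of the RAAG property has to be detected by a finer invariant — for instance the structure of centralizers, or of lower central series quotients — rather than by finite presentability.

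\emph{Where the difficulty lies.} I expect two steps to carry the weight. In the ``if'' direction it is the passage $G\mapsto\Z\times G$: the real content is that inside $\Z\times G$ the only central $\Z$-extensions of a RAAG that can occur are the split ones, which is exactly what the transfer argument above — combined with the torsion-freeness of $H^{\ast}(-;\Z)$ of a RAAG — is designed to extract. In the ``only if'' direction it is the construction and analysis of the non-RAAG subgroup of $G_{P_4}$, which, unlike the $C_4$ case, cannot be obtained simply from non-finite-presentability.
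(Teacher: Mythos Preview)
The paper does not supply a proof of Theorem~\ref{thm:droms}: it is stated as Droms' result with a citation to \cite{droms}, and the surrounding text treats it as background. There is therefore no in-paper argument to compare your proposal against; I can only assess the proposal on its own terms.

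Your ``if'' direction is essentially correct and follows the classical line. The recursive description of $(C_4,P_4)$-free graphs is standard, Kurosh handles disjoint unions, and your transfer argument for the step $G\mapsto\Z\times G$ is a clean way to force the central $\Z$-extension to split. One point worth making explicit: when $\bar H$ is a RAAG on an \emph{infinite} graph you still need $H^2(\bar H;\Z)$ torsion-free; this holds because $H_1(\bar H;\Z)$ is free abelian and $H_2(\bar H;\Z)$ is a direct limit (over finite full subgraphs) of free abelian groups, so the universal coefficient theorem gives $H^2\cong\Hom(H_2,\Z)$.

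The genuine gap is the $P_4$ half of the ``only if'' direction. You correctly observe that coherence of $G_{P_4}$ rules out the non-finite-presentability obstruction used for $C_4$, and you name candidate invariants (centralizers, lower central quotients), but you neither produce a specific subgroup nor carry out the verification that it is not a RAAG. That is precisely the substantive step in Droms' original argument, and without it the necessity direction is a plan rather than a proof. To close it you must actually exhibit a finitely generated subgroup of $G_{P_4}=\langle a,b,c,d\mid [a,b],[b,c],[c,d]\rangle$ and rule it out as a RAAG by one of the invariants you mention; merely asserting that such an invariant should work is not enough.
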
 
	Graphs without induced squares nor line-graphs of length $3$ appear in various contexts and with several names, as trivially perfect graphs \cite{triviallyperfect}, graphs of elementary type \cite{bqw}, or quasi-threshold graphs \cite{quasithreshold}. In honour to Droms, in the context of RAAGs, these graphs are usually called \textbf{Droms graphs}, and we will adopt the same denomination. Clearly, Droms graphs are chordal.
	
	It follows from the proof of Theorem \ref{thm:droms} that, for a finite graph $\Gamma$ to be a Droms graph, it is also sufficient that all the \textit{finitely generated} subgroups of $G_\Gamma$ are RAAGs, i.e., $G_\Gamma$ is locally RAAG.
	
	Among the subgroups of RAAGs, some distinguished normal subgroups have been studied since their discovery due to Bestvina and Brady in \cite{bb}, as they provide examples of finitely generated, yet non-finitely presented groups (in fact, not even of type $F_2$), that are of type FP.
Given a graph $\Gamma$, one can define the corresponding \textbf{Bestvina-Brady group} as the kernel of the ``length character" $\chi_\Gamma:G_\Gamma\to \Z$, obtained by extending $v\in V\mapsto 1\in \Z$.
	
	If $\Gamma$ is a Droms graph, then clearly $B_\Gamma$ is itself a RAAG, but the problem of detecting Bestvina-Brady groups that are RAAGs remains open in general.
	 Moved by Droms' Theorem \ref{thm:droms}, we are interested in understanding when the group $B_\Gamma$ is locally RAAG: clearly, if $G_\Gamma$ is locally RAAG, then so is $B_\Gamma$, but one can easily construct counterexamples to the converse statement (see Example \ref{ex:counter}(1)). 
	 In particular, we show
	\begin{thmA}\label{thmA}
		Let $\Gamma$ be a finite connected simplicial graph. Then, every subgroup of the Bestvina-Brady group $B_\Gamma$ is a RAAG if, and only if, $\Gamma$ is {\rm chordal} and no induced subgraph of $\Gamma$ has either of the two forms: 
	\begin{figure}[H]
		\centering
		\begin{tikzpicture}
			\draw (0,0) -- (2,0) -- (4,0) -- (6,0);
			\draw (0,0) -- (3,2);
			\draw (2,0) -- (3,2);
			\draw (4,0) -- (3,2);
			\draw (6,0) -- (3,2);
			\node [fill=black!100,circle,scale=0.3,draw] at (0,0) {};
			\node [fill=black!100,circle,scale=0.3,draw] at (2,0) {};
			\node [fill=black!100,circle,scale=0.3,draw] at (4,0) {};
			\node [fill=black!100,circle,scale=0.3,draw] at (6,0) {};
			\node [fill=black!100,circle,scale=0.3,draw] at (3,2) {};
			\node at (3,-1) {$\nabla$};
		\end{tikzpicture}
		~\qquad\qquad
		\begin{tikzpicture}
			\draw (-2,1) -- (0,2) -- (2,2) -- (4,1) -- (2,0) -- (0,0) -- (-2,1);
			\draw (0,0) -- (0,2) -- (2,0) -- (2,2) -- (0,0);
			
			\node [fill=black!100,circle,scale=0.3,draw] at (0,0) {};
			\node [fill=black!100,circle,scale=0.3,draw] at (2,0) {};
			\node [fill=black!100,circle,scale=0.3,draw] at (2,2) {};
			\node [fill=black!100,circle,scale=0.3,draw] at (0,2) {};
			\node [fill=black!100,circle,scale=0.3,draw] at (0,2) {};
			\node [fill=black!100,circle,scale=0.3,draw] at (-2,1) {};
			\node [fill=black!100,circle,scale=0.3,draw] at (4,1) {};
			\node at (1,-1) {$\bar H$};
		\end{tikzpicture}
		
	\end{figure}
	\end{thmA}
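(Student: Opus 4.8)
The plan is to analyse $B_\Gamma$ through induced‑subgraph inclusions and two splitting reductions, and to prove the two implications separately. First, if $\Lambda\le\Gamma$ is an induced subgraph then the canonical map $G_\Lambda\hookrightarrow G_\Gamma$ carries $\chi_\Lambda$ to the restriction of $\chi_\Gamma$, hence restricts to an embedding $B_\Lambda\hookrightarrow B_\Gamma$; in particular a non‑RAAG subgroup of some $B_\Lambda$ is a non‑RAAG subgroup of $B_\Gamma$. Second, if $v\in V$ is adjacent to every other vertex then $\Gamma=\{v\}*(\Gamma\setminus v)$, $G_\Gamma=G_{\Gamma\setminus v}\times\Z$ and $B_\Gamma\cong G_{\Gamma\setminus v}$; while if $v$ is simplicial with nonempty clique link $K=N(v)$, Bass–Serre theory applied to the clique‑sum splitting $G_\Gamma=G_{\Gamma\setminus v}*_{G_K}G_{K\cup\{v\}}$ gives (using that $\chi$ restricts onto $\Z$ on $G_K$, so the induced splitting of $B_\Gamma$ is a single edge, and that $B_K$ is a direct factor of $B_{K\cup\{v\}}\cong\Z^{|K|}$)
\[
B_\Gamma\ \cong\ B_{\Gamma\setminus v}\,*_{B_K}\bigl(B_K\times\Z\bigr)\ =\ \pres{B_{\Gamma\setminus v},\, t}{[t,B_K]=1},
\]
an HNN extension of $B_{\Gamma\setminus v}$ whose stable letter commutes with $B_K\cong\Z^{|K|-1}$.

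I prove necessity in contrapositive form, exhibiting in each of the three excluded situations a subgroup of $B_\Gamma$ that is not a RAAG. If $\Gamma$ is not chordal it contains an induced cycle $C_n$ with $n\ge4$, whose flag complex is a circle; by the results of Bestvina and Brady \cite{bb}, $B_{C_n}$ is finitely generated but not finitely presented, hence not a RAAG, and $B_{C_n}\le B_\Gamma$. If $\Gamma$ contains an induced $\nabla$, write $\nabla=\{v\}*P_4$; the first reduction gives $B_\nabla\cong G_{P_4}$, and since $P_4$ is a line‑graph of length $3$, Theorem~\ref{thm:droms} provides a non‑RAAG subgroup of $G_{P_4}\cong B_\nabla\le B_\Gamma$. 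Finally, suppose $\Gamma$ contains an induced $\bar H$, which is $K_4$ on a set $\{p,q,r,s\}$ together with a vertex $L$ joined to the edge $\{p,s\}$ and a vertex $R$ joined to the edge $\{q,r\}$. Here $L$ is simplicial, and $\bar H\setminus L$ has the universal vertex $q$, so $B_{\bar H\setminus L}\cong G_\Delta$ with $\Delta$ a triangle carrying one pendant vertex; the change of generators $x:=ps^{-1}$ exhibits $G_\Delta$ as the RAAG on a triangle $\{x,r,s\}$ with pendant $R$ on $r$, in which $B_{\{p,s\}}=\langle ps^{-1}\rangle$ is the vertex subgroup $\langle x\rangle$. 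The HNN description applied to $v=L$ then yields $B_{\bar H}\cong G_\Lambda$, where $\Lambda$ is a triangle with pendant vertices on two distinct corners; as $\Lambda$ contains an induced $P_4$, Theorem~\ref{thm:droms} gives a non‑RAAG subgroup of $B_{\bar H}\le B_\Gamma$.

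For sufficiency, suppose $\Gamma$ is chordal, $\nabla$‑free and $\bar H$‑free, and induct on $|V(\Gamma)|$. If $\Gamma$ is complete then $B_\Gamma\cong\Z^{|V|-1}$, all of whose subgroups are RAAGs. If $\Gamma$ is not complete but has a universal vertex $v$, then $B_\Gamma\cong G_{\Gamma\setminus v}$, and $\Gamma\setminus v$ is chordal with no induced $P_4$ (an induced $P_4$ together with $v$ would be an induced $\nabla$), hence is a Droms graph, so Theorem~\ref{thm:droms} applies. Otherwise $\Gamma$ is not complete and has no universal vertex, so (being chordal) it has a simplicial vertex $v$; then $\Gamma\setminus v$ is connected and still in our class, so by induction $B_{\Gamma\setminus v}\cong G_{\Lambda'}$ for a Droms graph $\Lambda'$, and the HNN reduction reads $B_\Gamma\cong\pres{G_{\Lambda'},\, t}{[t,B_{N(v)}]=1}$. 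It then suffices to show this group is $G_\Lambda$ for a Droms graph $\Lambda$, and for this we need: (a) that $B_{N(v)}$ is, after an automorphism of $G_{\Lambda'}$, the standard subgroup $G_{\Delta'}$ on a clique $\Delta'$ of $\Lambda'$ — so that $B_\Gamma\cong G_\Lambda$, where $\Lambda$ is obtained from $\Lambda'$ by adjoining a new vertex $t$ joined to exactly the vertices of $\Delta'$; and (b) that this $\Lambda$ is again a Droms graph. Since adjoining a simplicial vertex preserves chordality, and a simplicial vertex can only be an endpoint of an induced $P_4$, (b) is equivalent to the statement that $\Lambda'$ has no induced path $a-b-c$ with $a\in\Delta'$ and $b,c\notin\Delta'$.

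Establishing (a) and (b) is the main work, and the main obstacle. It forces one to strengthen the inductive hypothesis: one carries along with the isomorphism $B_\Gamma\cong G_{\Lambda(\Gamma)}$ an assignment, to each clique $S$ of $\Gamma$, of a clique $\Delta_S$ of $\Lambda(\Gamma)$ such that $B_S$ is identified with the standard subgroup $G_{\Delta_S}$, compatibly with inclusions of cliques and with both reductions above. Granting this bookkeeping, (b) becomes a combinatorial claim about $\Gamma$ itself: a path $a-b-c$ violating it would, after transport through the correspondence $\Lambda(\Gamma\setminus v)\leftrightarrow\Gamma\setminus v$ and recombination with $v$, exhibit in $\Gamma$ either an induced $\nabla$ (when the path lies inside the link of a single vertex) or an induced $\bar H$ (when it straddles a clique separation of $\Gamma$); so the hypotheses exclude it, and $\nabla$ and $\bar H$ are precisely the minimal obstructions. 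Carrying out this obstruction analysis and propagating the assignment $S\mapsto\Delta_S$ through the construction is the technical heart of the argument.
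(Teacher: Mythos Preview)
Your necessity direction matches the paper's (Examples~\ref{ex:counter} together with Theorem~\ref{thm:BB_FPn}): an induced long cycle gives a non-finitely-presented $B_{C_n}$, an induced gem gives $B_\nabla\cong G_{P_4}$, and an induced $\bar H$ gives a RAAG on a non-Droms graph. The amalgam/HNN description of $B_\Gamma$ along a simplicial vertex is also correct.

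The gap is in sufficiency. You reduce everything to claim~(a)---that under the inductive isomorphism $B_{\Gamma\setminus v}\cong G_{\Lambda'}$ the subgroup $B_{N(v)}$ becomes, after an automorphism of $G_{\Lambda'}$, the standard parabolic on a clique $\Delta'\subseteq\Lambda'$---and then explicitly leave (a), together with the compatibility bookkeeping $S\mapsto\Delta_S$, unproved, calling it ``the technical heart of the argument''. This is not a routine detail. The inductive isomorphism is produced abstractly, and there is no a~priori reason the particular free-abelian subgroup $B_{N(v)}$ lands on a standard clique subgroup rather than on some twisted abelian subgroup of $G_{\Lambda'}$. Making the strengthened induction go through simultaneously for the cone reduction and the simplicial-vertex HNN reduction, while also verifying the combinatorial obstruction~(b) via the as-yet-undefined correspondence, is precisely the work that is missing, and you give no indication of how to do it.

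The paper bypasses this difficulty by a different decomposition. Rather than peeling simplicial vertices, it proves a purely graph-theoretic lemma (Lemma~\ref{lem:charBKgrph}): a connected chordal $(\nabla,\bar H)$-free graph has either a \emph{central} vertex or a \emph{cut}-vertex, and is therefore a \emph{tree of Droms graphs}. A central vertex $z$ gives $B_\Gamma\cong G_{\Gamma\setminus z}$ with $\Gamma\setminus z$ Droms (exactly your cone case); a cut-vertex $v$ gives $B_\Gamma\cong B_{\Gamma_1}\amalg B_{\Gamma_2}$, a \emph{free} product, because the edge group $\langle v\rangle$ meets $B_\Gamma$ trivially. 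Since the edge group is trivial there is nothing to track across the amalgam, and the result is immediately a Droms RAAG by induction (Lemma~\ref{lem:bkbb}). Your simplicial-vertex splitting instead produces an amalgam with nontrivial edge group $B_{N(v)}\cong\Z^{|N(v)|-1}$, and controlling where that edge group sits inside the inductively-obtained RAAG is exactly the obstacle you identified but did not overcome.
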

	The graph $\nabla$ is called the \textbf{gem graph}, and $\bar H$ is a graph where two gems are overlapped; we will call it an \textbf{overlapping-gems graph}. 
	
	The pro-$p$ completions of right-angled Artin groups (and some generalizations of them) have been studied in \cite{ilirPavel} (and \cite{bqw}) in the attempt of confirming several Galois theoretic conjectures on the realizability of a pro-$p$ group as an absolute Galois group. In particular, 
	\begin{thm}[Snop\v ce, Zalesskii \cite{ilirPavel}]
		A finite graph $\Gamma$ is a Droms graph if, and only if, the pro-$p$ RAAG $G_{\Gamma,p}$ is isomorphic to the maximal pro-$p$ quotient of the absolute Galois group of a field containing a primitive $p^{th}$ root of $1$.
	\end{thm}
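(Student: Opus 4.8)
The statement splits into two implications of quite different nature, and I would prove them separately.

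\emph{Droms graph $\Rightarrow$ realizable.} I would induct on $|V(\Gamma)|$ using the recursive description of Droms (equivalently, trivially perfect) graphs: a connected Droms graph is either a single vertex or has a universal vertex $v$, in which case $\Gamma=\{v\}*(\Gamma\setminus v)$ with $\Gamma\setminus v$ again a Droms graph (possibly disconnected), while a disconnected Droms graph is a disjoint union of connected ones. Under the standard behaviour of RAAG pro-$p$ groups this gives $G_{\{v\},p}\cong\Z_p$, $G_{\{v\}*\Delta,p}\cong\Z_p\times G_{\Delta,p}$, and $G_{\Delta_1\sqcup\Delta_2,p}\cong G_{\Delta_1,p}\amalg G_{\Delta_2,p}$ (free pro-$p$ product). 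So $G_{\Gamma,p}$ lies in the smallest class of pro-$p$ groups containing $\Z_p$ that is closed under $G\mapsto\Z_p\times G$ and under free pro-$p$ products, and it is enough to realize each member of that class as a maximal pro-$p$ Galois group of a field containing $\mu_p$. Here I would start from a $p$-closed field that contains all $p$-power roots of unity (e.g.\ $\overline{\Q}$, whose group is trivial): iterating the formal Laurent series construction realizes $G\mapsto\Z_p\times G$ on the Galois side (the cyclotomic action remaining trivial, since $\mu_{p^\infty}$ stays in the base), and realizability of free pro-$p$ products of such Galois groups over fields still containing $\mu_p$ is a standard ``building block'' result. I would cite these realizability facts rather than reprove them; the induction then closes.

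\emph{Realizable $\Rightarrow$ Droms graph.} I would argue by contraposition. If $\Gamma$ is not a Droms graph it contains an induced subgraph isomorphic to the $4$-cycle $C_4$ or to the path $P_4$ on four vertices. For any induced $\Gamma'\subseteq\Gamma$, killing the generators outside $\Gamma'$ splits the inclusion $G_{\Gamma',p}\hookrightarrow G_{\Gamma,p}$, so $G_{\Gamma',p}$ is a closed subgroup of $G_{\Gamma,p}$; and every closed subgroup of a maximal pro-$p$ Galois group of a field with $\mu_p$ is again one (the Galois group of the fixed field, which still contains $\mu_p$). Hence it suffices to show neither $G_{C_4,p}$ nor $G_{P_4,p}$ is realizable. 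For $C_4$: $G_{C_4,p}$ is the direct product of two free pro-$p$ groups of rank $2$, and I would invoke the norm residue theorem (Rost--Voevodsky), which forces every realizable pro-$p$ group to be Bloch--Kato --- i.e.\ to have quadratic $\F_p$-cohomology in every closed subgroup --- and combine it with the fact that $G_{\Gamma,p}$ is \emph{not} Bloch--Kato as soon as $\Gamma$ has an induced cycle of length $\geq4$: concretely one exhibits a closed subgroup (a Bieri--Stallings-type kernel inside the above direct product) whose continuous $\F_p$-cohomology is finite-dimensional in degree $1$ but not in degree $2$, hence non-quadratic. For $P_4$: this graph \emph{is} chordal, so $G_{P_4,p}$ is Bloch--Kato and quadraticity of subgroup cohomology no longer obstructs anything; instead I would use that $G_{P_4,p}$ has $\cd=2$ and is freely indecomposable with trivial centre, together with a finer necessary condition on realizable pro-$p$ groups of small cohomological dimension (their being ``of elementary type'', which in this range one can establish unconditionally), to conclude $G_{P_4,p}$ is not realizable.

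\emph{Where the difficulty lies.} The forward direction is essentially bookkeeping layered on known closure results. The backward direction carries the real weight: turning the heuristic ``large second cohomology'' for the $C_4$ case into a rigorous statement about \emph{continuous} cohomology requires care (goodness of the relevant subgroup and control of its pro-$p$ completion as a closed subgroup), and --- harder still --- pinning down and applying the correct finer Galois-theoretic obstruction in the chordal-but-not-Droms case $P_4$ is, I expect, the main hurdle.
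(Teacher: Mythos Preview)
First, a framing remark: in this paper the theorem is not proved at all --- it is quoted from Snop\v ce--Zalesskii and then restated as item (7) of Theorem \ref{thm:bkraags}. So there is no in-paper argument to compare your proposal against; what one can do is compare your sketch with the logic of that cited result as the paper summarizes it.

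Your forward direction is fine and is the standard one: the recursive description of Droms graphs translates into building $G_{\Gamma,p}$ from $\Z_p$ by iterated direct products with $\Z_p$ and free pro-$p$ products, and both operations preserve realizability over fields containing $\mu_p$.

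Your backward direction, however, contains a genuine error. You write that $P_4$ ``is chordal, so $G_{P_4,p}$ is Bloch--Kato'', and then go looking for a finer obstruction. But chordality is the graph condition equivalent to \emph{coherence} of the RAAG, not to the Bloch--Kato property; the Bloch--Kato property for $G_{\Gamma,p}$ is equivalent to $\Gamma$ being a \emph{Droms} graph (this is exactly item (5) of Theorem \ref{thm:bkraags}, also due to Snop\v ce--Zalesskii). Since $P_4$ is precisely the forbidden path $L_3$, the pro-$p$ group $G_{P_4,p}$ is \emph{not} Bloch--Kato, and the very same obstruction you used for $C_4$ --- Rost--Voevodsky forces a maximal pro-$p$ Galois group to be BK, so any closed subgroup with non-quadratic $\F_p$-cohomology kills realizability --- already disposes of $P_4$. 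Concretely, one exhibits a closed subgroup of $G_{P_4,p}$ whose $\F_p$-cohomology is not quadratic; no appeal to an ``elementary type'' classification or to freely indecomposable $\cd=2$ structure is needed. In short, the ``main hurdle'' you anticipate does not exist: once you drop the mistaken implication chordal $\Rightarrow$ BK, the $P_4$ case falls to the same argument as the $C_4$ case.
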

	
	We deduce an analogue of that result for the pro-$p$ completion of the Bestvina-Brady groups.
	
	Following \cite{weig}, \cite{sb_kosz}, and \cite{cmp}, we also study the Lie algebra counterpart ($\li_\Gamma$, and $\mc B_\Gamma$) of RAAGs and Bestvina-Brady groups. The advantage of working with positively graded Lie algebras relies on the fact that cohomology computations are easier than in the group-case. For instance, by means of a spectral sequence due to May, the cohomology ring of Bestvina-Brady groups defined on acyclic flag complexes is computed.
	\begin{prop}
		 If $\Gamma$ is a finite graph with acyclic flag complex over a field $k$, then  \[H^\bu(B_\Gamma,k)\simeq H^\bu(G_\Gamma,k)/(\chi_\Gamma)\] where $(\chi_\Gamma)$ denotes the ideal of $H^\bu(G_\Gamma,k)$ generated by the class of the $1$-cocycle $\chi_\Gamma:G\to k$.
	\end{prop}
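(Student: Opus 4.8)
The plan is to compute $H^\bu(B_\Gamma,k)$ from $H^\bu(G_\Gamma,k)$ by means of the group extension
\[
1 \longrightarrow B_\Gamma \longrightarrow G_\Gamma \overset{\chi_\Gamma}{\longrightarrow} \Z \longrightarrow 1,
\]
equivalently of the fibration $BB_\Gamma\to BG_\Gamma\to B\Z=S^1$ of classifying spaces ($BG_\Gamma$ being the Salvetti complex). To this fibration one attaches a multiplicative Eilenberg--Moore-type spectral sequence --- the ``May spectral sequence'' referred to in the statement ---
\[
E_2^{\bu\bu}=\tor^{\bu\bu}_{H^\bu(\Z,k)}\!\bigl(H^\bu(G_\Gamma,k),k\bigr)\ \Longrightarrow\ H^\bu(B_\Gamma,k),
\]
a spectral sequence of graded $k$-algebras, where $H^\bu(\Z,k)=H^\bu(S^1,k)$ is the exterior algebra $R:=k[\chi]/(\chi^2)$ on a degree-one class $\chi$, and the structure map $R\to H^\bu(G_\Gamma,k)$ sends $\chi$ to the class $[\chi_\Gamma]\in H^1(G_\Gamma,k)$.

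First I would compute the $E_2$-page. Since $\chi^2=0$, the trivial $R$-module $k$ has the $2$-periodic free resolution $\cdots\xrightarrow{\,\chi\,}R\xrightarrow{\,\chi\,}R\xrightarrow{\,\chi\,}R\to k\to 0$, so for a graded $R$-module $M$ one gets $\tor^R_0(M,k)=M/\chi M$ and $\tor^R_i(M,k)=\ker(\chi\colon M\to M)/\chi M$ for $i\ge 1$; that is, the higher $\tor$-groups are the cohomology of $M$ viewed as a complex with differential ``multiplication by $\chi$''. For $M=H^\bu(G_\Gamma,k)$ this identifies the $\tor$-degree-$0$ column of $E_2$ with the algebra $H^\bu(G_\Gamma,k)/(\chi_\Gamma)$ of the statement, and the remaining columns with the cohomology of the complex $\bigl(H^\bu(G_\Gamma,k),\ \chi_\Gamma\smile-\bigr)$.

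To annihilate those remaining columns I would invoke the standard description of $H^\bu(G_\Gamma,k)$ as the exterior Stanley--Reisner (face) ring $\Lambda_{L}$ of the flag complex $L=L_\Gamma$: a $k$-basis is $\{e_\sigma:\sigma\in L\}$, indexed by the faces of $L$ (the empty face giving the unit), with $\deg e_\sigma=\lvert\sigma\rvert$, and under this identification $[\chi_\Gamma]$ is $\sum_{v\in V}e_v$. A sign check shows that multiplication by $\sum_{v}e_v$ on $\Lambda_L$ becomes, after the degree shift $\Lambda_L^{\,p}\leftrightarrow\widetilde C^{\,p-1}(L;k)$, the augmented simplicial coboundary of $L$; hence the cohomology of $\bigl(\Lambda_L,\ \chi_\Gamma\smile-\bigr)$ is $\widetilde H^{\,\bu-1}(L;k)$, which vanishes precisely because $L$ is acyclic over $k$. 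Thus $\tor^R_i(H^\bu(G_\Gamma,k),k)=0$ for all $i\ge 1$, the spectral sequence is concentrated in $\tor$-degree $0$, it collapses with no extension problems, and multiplicativity yields the desired isomorphism of graded $k$-algebras $H^\bu(B_\Gamma,k)\cong H^\bu(G_\Gamma,k)/(\chi_\Gamma)$. (Equivalently: acyclicity of $L$ over $k$ is exactly the statement that $H^\bu(G_\Gamma,k)$ is \emph{free} over the exterior subalgebra generated by $\chi_\Gamma$, and that freeness is what forces the collapse.)

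The one genuinely delicate point is the convergence of the spectral sequence: the base $S^1$ is not simply connected, so I must guarantee that $\pi_1(S^1)=\Z$ acts nilpotently on $H_\bu(B_\Gamma;k)$, which is the hypothesis under which May's version of the Eilenberg--Moore spectral sequence converges strongly. Here there are two inputs. By the Bestvina--Brady criterion, acyclicity of $L$ over $k$ makes $B_\Gamma$ of type $FP_\infty$ over $k$, and, being a subgroup of $G_\Gamma$, of finite cohomological dimension; hence $H_\bu(B_\Gamma;k)$ is finite-dimensional. And the monodromy is the deck action of $\Z$ on the infinite cyclic cover of the Salvetti complex cut out by $\chi_\Gamma$, which is $k$-homologically trivial because, by Bestvina--Brady Morse theory on that cover, each sublevel set of the height function includes into the whole cover as a $k$-homology isomorphism while $\Z$ merely translates the sublevel sets. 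Granting this, the argument above applies; the only remaining care needed is the (routine) sign bookkeeping identifying $\chi_\Gamma\smile-$ with the simplicial coboundary.
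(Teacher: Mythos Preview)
Your argument is essentially sound, but it follows a genuinely different route from the paper's. You attack the extension $1\to B_\Gamma\to G_\Gamma\to\Z\to1$ directly with an Eilenberg--Moore spectral sequence over $H^\bu(S^1,k)=k[\chi]/(\chi^2)$, and the heart of your computation---that multiplication by $\chi_\Gamma=\sum_v e_v$ on the exterior face ring $H^\bu(G_\Gamma,k)$ is, up to a degree shift, the simplicial coboundary of the flag complex, so that acyclicity makes $H^\bu(G_\Gamma,k)$ free over $k[\chi_\Gamma]/(\chi_\Gamma^2)$ and kills the higher $\tor$---is correct. The paper, however, does not run an Eilenberg--Moore spectral sequence. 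It first passes to Lie algebras: acyclicity of $\Delta_\Gamma$ makes $\mc B_\Gamma$ of type $FP_\infty$ (Theorem~\ref{thm:BB_FPn}) and hence Koszul by \cite[Cor.~2.7]{sb_kosz}; Koszul duality then identifies $H^\bu(\mc B_\Gamma,k)$ with the quadratic dual $H^\bu(\li_\Gamma,k)/(\chi_\Gamma)$; finally, using $\gr B_\Gamma\cong\mc B_\Gamma$ (from \cite{psBB}), May's spectral sequence \cite{may}---the one associated with the \emph{lower central series} filtration, not an Eilenberg--Moore construction---collapses by Koszulity to give $H^\bu(B_\Gamma,k)\cong H^\bu(\mc B_\Gamma,k)$. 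So your parenthetical identifying your spectral sequence with ``the May spectral sequence referred to in the statement'' is a misreading: the paper's May is the lower-central-series one. What your approach buys is directness: no Lie-algebra detour, no Koszul machinery. What it costs is precisely the convergence issue you flag over the non-simply-connected base; your Morse-theoretic sketch that the monodromy is $k$-homologically trivial is plausible but not yet a proof (one must check that the deck translation and the inclusion of sublevel sets induce the \emph{same} map on homology, not merely that both are isomorphisms). The paper's route through $\gr$ sidesteps that delicacy and yields Koszulity of $H^\bu(B_\Gamma,k)$ as an extra dividend.
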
 
	
	In \cite{servdroms}, the authors produce embeddings of non-abelian surface groups into (the derived subgroup of) RAAGs defined by graphs with induced cycles of length at least $4$. We show that no non-abelian surface group appears as a Bestvina-Brady group. We also prove a similar result for non-abelian Poincaré duality Lie algebras.
	
	At the end of the paper we show that the Bestvina-Brady group of a graph is coherent precisely when so is the right-angled Artin group, that is, when the graph is chordal.

	\begin{acknowledgment}
	The author is supported by the Austrian Science Foundation (FWF), Grant DOI 10.55776/P33811.
	\end{acknowledgment}

	\section{Right-angled Artin \& Bestvina-Brady groups and Lie algebras}
	Since we are interested both in right-angled Artin groups and their Lie algebraic counterparts, we will work in a more general setting.
	Henceforth, let $\mathcal A$ be either the category of ($p$-restricted) positively-graded Lie algebras over a commutative ring (resp. over the finite field $\F_p$, $p$ odd), or the category of (pro-$p$) groups. 
	\subsection{RAAGs}If $\Gamma$ is a simplicial graph, we denote by $\mf R_\Gamma$ the \textbf{RAAG object} given by the presentation (\ref{eq:RAAG}) in the category $\mathcal A$, where the generators have degree $1$ in the Lie algebra case. We will identify the canonical generators of $\mf R_\Gamma$ with the vertices of $\Gamma$. 

Many algebraic properties of $\mf R_\Gamma$ (e.g., the form of the trivial coefficient cohomology, the coherence property, and the decomposability into free/direct product) do not depend on the chosen category, but only on the underlying graph. 

Nevertheless, Theorem \ref{thm:droms} is not true in the case of RAAG Lie algebras, as the following example shows.
	\begin{exam}\label{ex:2rel} Consider the graph $\Gamma$ with geometric realization
		
		\begin{figure}[H]
			\centering
			\begin{tikzpicture}
				\draw (0,0) -- (2,0);
				
				\node [fill=black!100,circle,scale=0.3,draw,"$a$"] at (0,0) {};
				\node [fill=black!100,circle,scale=0.3,draw, "$b$"] at (2,0) {};
				\node [fill=black!100,circle,scale=0.3,draw,"$x$"] at (1,-1) {};
			\end{tikzpicture}
		\end{figure}

		Then, $\Gamma$ is a Droms graph and hence every (finitely generated) subgroup of $G_\Gamma$ is itself a RAAG. 
		The RAAG Lie algebra 
		$\li_\Gamma$ over an arbitrary field $k$ can be given a presentation $\pres{a,b,x}{[a,b]}$. The subalgebra 
		$\mc M$ generated by the elements $a,b, z=[x,a]$ and $t=[x,b]$ has minimal presentation \[\pres{a,b,z=[x,a],t=[x,b]}{[a,b],[z,b]+[t,a]},\] and hence it is not a RAAG Lie algebra (see \cite[Sec. 6]{cmp}). Notice that $\mc M$ is not contained in $\li_\Gamma'$, which is free by Proposition \ref{prop:cohe}. 
	\end{exam}
	However, by \cite{sb_kosz}, Theorem \ref{thm:droms} remains true if we restrict to the \textit{standard} subalgebras the RAAG Lie algebra defined by a Droms graph. Recall that a graded Lie algebra $\li=\bigoplus_{i=1}^\infty \li_i$ is said to be standard if it is generated by its elements of degree $1$, i.e., $[\li,\li]=\bigoplus_{i\geq 2}\li_i$. Such a Lie algebra is Bloch-Kato (or BK) if every standard subalgebra of $\li$ is quadratic, i.e., it admits a set of relations of degree $2$. Similarly, a pro-$p$ group $G$ is BK if the Galois cohomology ring $H^\bu(H,\F_p)$ is a quadratic algebra for every closed subgroup $H$ of $G$. For a comprehensive exposition on Koszul and BK Lie algebra, we refer the reader to the author's paper \cite{sb_kosz}.
	
		\begin{thm}\label{thm:bkraags}
		Let $\Gamma$ be a finite simplicial graph. Then, $\Gamma$ is a Droms graph if, and only if, anyone of the following statements holds:
		\begin{enumerate}
			\item All subgroups of $G_\Gamma$ are RAAGs (possibly defined by infinite graphs) (\hspace{1pt}\cite{droms}),
			\item All standard subalgebras of $\li_\Gamma$ are RAAG Lie algebras (\hspace{1pt}\cite[Prop. 3.17 ]{sb_kosz}),
			\item The RAAG Lie algebra $\li_\Gamma$ is BK (\hspace{1pt}\cite{sb}),
			\item The cohomology $k$-algebra $H^\bu(\mf R_\Gamma,k)$ is universally Koszul, where $k$ is a field (see \cite{cassquad},\cite{ilirPavel},\cite{sb}),
			\item The pro-$p$ RAAG $ G_{\Gamma,p}$ is BK (\hspace{1pt}\cite{ilirPavel}),
			\item Every closed subgroup of $ G_{\Gamma,p}$ is a pro-$p$ RAAG (\hspace{1pt}\cite{ilirPavel}),
			\item The pro-$p$ group $ G_{\Gamma,p}$ is isomorphic to the maximal pro-$p$ quotient $G_\K(p)$ of the absolute Galois groups of a field $\K$ containing a primitive $p^{th}$ root of $1$ (\hspace{1pt}\cite{ilirPavel}).
		\end{enumerate}
	\end{thm}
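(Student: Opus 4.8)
\emph{Overview.} Each of the seven conditions is, individually, known to be equivalent to ``$\Gamma$ is a Droms graph'' (the relevant references are attached to each item), so the work is to assemble these facts into a single statement and to make transparent why such different-looking properties all detect the same graphs. A few of the implications among (1)--(7) are moreover formal --- a RAAG (Lie algebra) is quadratic, so (2)$\Rightarrow$(3) and (6)$\Rightarrow$(5); universal Koszulity of $H^\bu(\mf R_\Gamma,k)$ is tied by Koszul duality to a Bloch--Kato-type condition on $\li_\Gamma$, linking (3) and (4); and so on --- but rather than chase the optimal ring of implications I would prove, for each $i$, both ``$\Gamma$ Droms $\Rightarrow$ (i)'' and ``(i) $\Rightarrow$ $\Gamma$ Droms'', organising the first around the recursive construction of Droms graphs and the second around their two forbidden induced subgraphs.

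\emph{Forward direction.} A finite graph is a Droms graph precisely when it is obtained from isolated vertices by iterating disjoint union and coning (adjoining one vertex adjacent to all others); on the RAAG object these become $\mf R_{\Gamma_1\sqcup\Gamma_2}=\mf R_{\Gamma_1}\ast\mf R_{\Gamma_2}$ and $\mf R_{\mathrm{cone}(\Gamma')}=\mf R_\bu\times\mf R_{\Gamma'}$, with $\mf R_\bu$ the infinite cyclic group, resp.\ the $1$-dimensional abelian (restricted) Lie algebra. I would induct on this construction, checking that each of (1)--(7) is stable under free products and under direct products with $\mf R_\bu$: for (1) this is essentially Droms' argument \cite{droms}; for (2)--(3) one uses the structure of subalgebras of free and direct products of graded Lie algebras \cite{sb_kosz,sb}; for (4) one uses the behaviour of trivial-coefficient cohomology under free products (a coproduct of connected graded algebras) and under direct products with $\Z$ (an exterior extension), both preserving universal Koszulity \cite{cassquad,ilirPavel,sb}; and (5)--(7) are precisely \cite{ilirPavel}. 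The base case $\mf R_\bu$ is trivial in every category.

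\emph{Converse direction, and the main obstacle.} If $\Gamma$ is not a Droms graph it contains an induced square $C_4$ or an induced path $P_4$ of length $3$. For any induced subgraph $\Delta\subseteq\Gamma$, killing the vertices outside $\Delta$ is a retraction $\mf R_\Gamma\twoheadrightarrow\mf R_\Delta$ splitting $\mf R_\Delta\hookrightarrow\mf R_\Gamma$, inducing on cohomology a split surjection of graded algebras $H^\bu(\mf R_\Delta,k)\hookrightarrow H^\bu(\mf R_\Gamma,k)$. Each property in (1)--(7) descends along such retracts --- a non-RAAG subobject of $\mf R_\Delta$ is a non-RAAG subobject of $\mf R_\Gamma$; a subalgebra-and-quotient of a universally Koszul algebra is universally Koszul; Bloch--Kato passes to closed subgroups, hence to retracts; and similarly in the pro-$p$ setting --- so it suffices to refute (1)--(7) for $\Gamma=C_4$ and $\Gamma=P_4$. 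This is where the real content lies, and why the theorem is a compilation. One must produce: for (1), finitely generated non-RAAG subgroups of $G_{C_4}\cong F_2\times F_2$ and of $G_{P_4}$ (Droms \cite{droms}; for $C_4$ one may take the kernel of the length character, finitely generated but, as the flag complex is a circle, not finitely presented); for (2)--(3), a standard subalgebra of $\li_{C_4}$ (resp.\ $\li_{P_4}$) carrying an essential degree-$3$ relation surviving minimalisation, in the spirit of Example \ref{ex:2rel}, so that these Lie algebras are not BK \cite{sb}; for (4), the verification in \cite{cassquad,sb} that the exterior face algebras $H^\bu(\mf R_{C_4},k)$ and $H^\bu(\mf R_{P_4},k)$ are not universally Koszul; and for (5)--(7), the closed subgroups of $G_{C_4,p}$ and $G_{P_4,p}$ exhibited in \cite{ilirPavel} that are neither pro-$p$ RAAGs nor Bloch--Kato, together with the known constraints on maximal pro-$p$ Galois groups (quadraticity of Galois cohomology, vanishing of the pertinent Massey products) obstructing (7). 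I would import all of these computations from the cited works; the only new observation needed is that every one of the listed properties is simultaneously stable under coning and disjoint union and inherited by retracts.
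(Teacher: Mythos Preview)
The paper does not give a proof of this theorem at all: it is stated purely as a compendium of results from the literature, with each of the seven items carrying its own citation, and is then used as a black box throughout the rest of the paper. Your proposal is therefore not a competing proof but a correct and well-organized reconstruction of how the cited results fit together --- the recursive Droms construction (disjoint union and coning) for the forward direction, and the retract argument reducing the converse to the two forbidden subgraphs $C_4$ and $P_4$ --- which is indeed the common skeleton behind all of \cite{droms,sb_kosz,sb,cassquad,ilirPavel}. Two small remarks: your sentence ``inducing on cohomology a split surjection \dots $H^\bu(\mf R_\Delta,k)\hookrightarrow H^\bu(\mf R_\Gamma,k)$'' conflates the two maps --- the inclusion $\mf R_\Delta\hookrightarrow\mf R_\Gamma$ gives the surjection $H^\bu(\mf R_\Gamma,k)\twoheadrightarrow H^\bu(\mf R_\Delta,k)$, split by the pullback along the retraction, and it is this quotient (by an ideal generated in degree~$1$, since the killed generators are vertices) that lets universal Koszulity descend; and for item~(7) the cleanest obstruction is simply that $G_\K(p)$ is Bloch--Kato by the norm residue isomorphism, so (7)$\Rightarrow$(5) is immediate and the Massey-product considerations, while valid, are not needed here.
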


	An object $\mf C$ of $\mathcal A$ is called a Droms object if it is isomorphic to $\mf R_\Gamma$ for a Droms graph $\Gamma$, i.e., it is a RAAG object whose defining graph is a Droms graph.


	\subsection{Bestvina-Brady}

	 In this work, $k$ will always denote a ring whose nature depends on the chosen category.
	 \begin{enumerate}
	 	\item For Lie algebras, $k$ is an arbitrary field,
	 	\item For $p$-restricted Lie algebras, $k=\F_p$ is the finite field of order $p$,
	 	\item For abstract groups, $k=\Z$ is the ring of integers,
	 	\item For pro-$p$ groups, $k=\Z_p$ is the ring of $p$-adic integers.
	 \end{enumerate}
	 
	There exists a natural \textbf{length character} $\chi_\Gamma:\mf R_\Gamma\to k$ sending each vertex to $1\in k$. The kernel $\mf B_\Gamma$ of $\chi_\Gamma$ is the (right-angled)  Bestvina-Brady object. 
		If $\Lambda$ is induced in a graph $\Gamma$, since the character $\chi_\Lambda$ is the restriction of $\chi_\Gamma$ on the subalgebra $\mf R_\Lambda$, the object $\mf B_\Lambda$ naturally embeds into $\mf B_\Gamma$, and $\mf B_\Lambda=\mf R_\Lambda\cap \mf B_\Gamma$. 
		
		We will denote the (pro-$p$) groups by the letters $G$, $B$, ... (resp. $G_p$, $B_p$, ...), and the ($p$-restricted) graded Lie algebras by $\li,\mc M,\mc B$, ... For instance, the right-angled Artin (pro-$p$) group is denoted by $G_\Gamma$ (resp. $G_{\Gamma,p}$) and its Bestvina-Brady (pro-$p$) group by $B_\Gamma$ (resp. $ B_{\Gamma,p}$). Similarly, $\li_\Gamma$ and $\mc B_\Gamma$ (resp. $\li_{\Gamma,p}$ and $\mc B_{\Gamma,p}$) will denote the RAAG and the Bestvina-Brady (resp. $p$-restricted) Lie algebra of $\Gamma$.
		
		If $p$ is an odd prime, then $\li_{\Gamma,p}$ is the $p$-restrictification of the RAAG Lie $\F_p$-algebra $\li_\Gamma$ in the sense of \cite{sb_kosz} (see \cite{barth}), and hence their cohomology theories are the same; thus, we will only focus on the non-restricted case.
		
		In order to avoid the difference in the behavior of groups and Lie algebras as in Example \ref{ex:2rel}, we give the following definition of a \textit{local} property.
		If $\mathcal P$ is a group  theoretic (resp. Lie algebra theoretic) property, we say that a group $G$ (resp. a graded Lie algebra $\li$) is \textbf{locally} $\mathcal P$ if every finitely generated subgroups of $G$ (resp. every \textit{standard} subalgebra of $\li$) satisfies the property $\mathcal P$. By Theorem \ref{thm:bkraags}, Droms objects are locally Droms.

		In \cite{cmp2} and \cite{bb} the authors prove that the topology of the flag complex of a graph determines cohomological finiteness properties of the corresponding Bestvina-Brady objects. 
			\begin{thm}\label{thm:BB_FPn}
			If $n$ is a natural number, then the following statements are equivalent: 
			\begin{enumerate}
				\item The flag complex $\Delta_\Gamma$ on $\Gamma$ is $(n-1)$-acyclic over $k$, i.e., the reduced simplicial homology groups $\tilde H_i(\Delta_\Gamma,k)$ vanish over $k$ for $i\leq n-1$;
				\item The object $\mf B_\Gamma$ is of type FP$_n$ over $k$.
			\end{enumerate}
		\end{thm}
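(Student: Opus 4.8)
The plan is to treat the two possibilities for $\mathcal A$ in parallel, each time reducing finiteness of $\mf B_\Gamma$ to the connectivity of the flag complex $\Delta_\Gamma$. The common input is that $\mf R_\Gamma$ carries a \emph{finite} free resolution of the trivial module $k$ modelled on $\Delta_\Gamma$ — the Salvetti resolution for (pro-$p$) groups, the Koszul (Chevalley–Eilenberg) complex for (restricted) Lie algebras — so $\mf R_\Gamma$ is of type FP$_\infty$ of finite cohomological dimension; and that the extension $1\to\mf B_\Gamma\to\mf R_\Gamma\xrightarrow{\chi_\Gamma}Q\to 1$ splits, where $Q=\Z$ for groups and $Q=k$ (the one-dimensional abelian Lie algebra) for Lie algebras (any vertex $v$ has $\chi_\Gamma(v)=1$ and spans a complement), so that $\mf R_\Gamma\cong\mf B_\Gamma\rtimes Q$ and $\chi_\Gamma$ plays the role of a height function.

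For (1)$\Rightarrow$(2) in the group case I would run the Bestvina–Brady Morse theory of \cite{bb}: on the universal cover $X$ of the Salvetti complex, with the $\mf B_\Gamma$-invariant Morse function $f\colon X\to\dbR$ lifting $\chi_\Gamma$, the descending link at a vertex consists of the edges along $v^{-1}$ for $v\in V(\Gamma)$, and such edges span a simplex exactly when the corresponding vertices form a clique of $\Gamma$, so the ascending and descending links at every vertex are copies of $|\Delta_\Gamma|$. If $\Delta_\Gamma$ is $(n-1)$-acyclic over $k$, the Morse Lemma makes each inclusion $X_{\le s}\hookrightarrow X_{\le t}$ (and $X_{\ge t}\hookrightarrow X_{\ge s}$) a $k$-homology isomorphism through degree $n-1$; since $X$ is contractible, passing to the colimit and then applying Mayer–Vietoris to $X=X_{\le 0}\cup X_{\ge 0}$ with intersection the (suitably cellulated) level set $Z=f^{-1}(0)$ shows $Z$ is $(n-1)$-acyclic over $k$. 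Since $\mf R_\Gamma$ acts cocompactly on $X$ and $\mf B_\Gamma=\kernel\chi_\Gamma$, the group $\mf B_\Gamma$ acts freely on $Z$ with finitely many cell orbits in each dimension, so the cellular chains of $Z$ give a free $k\mf B_\Gamma$-resolution of $k$ that is finitely generated through degree $n$, i.e.\ $\mf B_\Gamma$ is FP$_n$ over $k$. In the Lie algebra case the statement is obtained in \cite{cmp2} by feeding the $(n-1)$-acyclicity of $\Delta_\Gamma$ into the May-type spectral sequence of the extension relating $H^\bu(\mc B_\Gamma,k)$ to $H^\bu(\li_\Gamma,k)$ — whose input is the exterior face ring $H^\bu(\li_\Gamma,k)$ together with the class of $\chi_\Gamma$, so that both implications come out at once — using that the $(n-1)$-acyclic range of $\tilde H^\bu(\Delta_\Gamma,k)$ forces $H^i(\mc B_\Gamma,k)$ to be finite-dimensional for $i\le n$.

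For (2)$\Rightarrow$(1) I would argue contrapositively: suppose $\tilde H_m(\Delta_\Gamma,k)\ne 0$ for some minimal $m\le n-1$. Filtering the contractible complex $X$ by the $\mf B_\Gamma$-cocompact subcomplexes $X_{[-j,j]}=f^{-1}([-j,j])$ and tracking the Morse function — each step cones off copies of $|\Delta_\Gamma|$ at the new top and bottom vertices, which destroys and simultaneously creates $m$-dimensional $k$-homology classes — one shows that the inverse system $\{\tilde H_m(X_{[-j,j]};k)\}_j$ is \emph{not} essentially trivial, so Brown's criterion for FP$_n$ rules out FP$_{m+1}$, a fortiori FP$_n$; on the Lie side the same obstruction appears as a class surviving to $E_\infty$ in the May spectral sequence. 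I expect this converse to be the main obstacle: the forward direction is a fairly mechanical consequence of the Morse Lemma together with cocompactness of the level set, whereas the converse requires the delicate ``essential non-triviality'' bookkeeping of Bestvina–Brady and, for Lie algebras, its transcription into the spectral-sequence setting.
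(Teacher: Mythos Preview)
The paper does not give its own proof of this theorem: it is stated as a result quoted from the literature, with the sentence immediately preceding it attributing the group case to Bestvina--Brady \cite{bb} and the Lie-algebra case to \cite{cmp2}. So there is no in-paper argument to compare against; the theorem functions here as background.

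That said, your sketch is a faithful outline of the proofs in those references. The group case is exactly the Morse-theoretic argument of \cite{bb}: ascending/descending links in the universal cover of the Salvetti complex are copies of $|\Delta_\Gamma|$, the Morse lemma gives $(n-1)$-acyclicity of the level set when $\Delta_\Gamma$ is $(n-1)$-acyclic, and Brown's criterion handles the converse via the essential non-triviality of the inverse system of homologies of sublevel sets. For Lie algebras your description is thinner but points in the right direction: the argument in \cite{cmp2} does go through the short exact sequence $0\to\mc B_\Gamma\to\li_\Gamma\to k\to 0$ and the associated spectral sequence / long exact sequence, using that $H^\bu(\li_\Gamma,k)$ is the exterior face ring of $\Gamma$ and that multiplication by $\chi_\Gamma$ is the relevant differential. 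Two minor cautions: calling the resolution on the Lie side the ``Chevalley--Eilenberg complex'' is a slight abuse (that complex is for finite-dimensional Lie algebras; here one uses the Koszul/minimal resolution of the quadratic algebra $\mc U(\li_\Gamma)$), and you do not separately address the pro-$p$ case, which is not literally covered by the CAT(0) Morse argument and needs either a completion argument from the discrete case or an independent treatment. Neither point is a real gap for a result the paper is only citing.
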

		
		In particular, by \cite[Cor. 2.7]{sb_kosz}, the Lie $k$-algebra $\mc B_\Gamma$ is Koszul if, and only if, $\Delta_\Gamma$ is acyclic over $k$.
		
		\begin{cor}
			Let $\Gamma$ be a finite simplicial graph. If $\Delta_\Gamma$ is acyclic over $k$, then, \[H^\bu(\mc B_\Gamma,k)\simeq H^\bu(\li_\Gamma,k)/(\chi_\Gamma\cdot H^1(\li_\Gamma,k)),\] where $\chi_\Gamma:\li_\Gamma\to k$ is the length character of $\li_\Gamma=\li_\Gamma(k)$, and $\mc B_\Gamma=\ker\chi_\Gamma$. Moreover, $H^\bu(\mc B_\Gamma,k)$ is a Koszul algebra. 
		\end{cor}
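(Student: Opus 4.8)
The plan is to compare the two cohomology rings via the Lyndon--Hochschild--Serre spectral sequence of the extension of graded Lie algebras
\[0\longrightarrow\mc B_\Gamma\longrightarrow\li_\Gamma\xrightarrow{\ \chi_\Gamma\ }k\longrightarrow0,\]
where $k$ denotes the one-dimensional abelian Lie algebra; this is a short exact sequence since $\chi_\Gamma(v)=1$ for any vertex $v$ (which also provides a section). As $k$ is one-dimensional, the $E_2$-page $E_2^{p,q}=H^p\bigl(k,H^q(\mc B_\Gamma,k)\bigr)$ is supported in the two columns $p=0,1$, so the spectral sequence degenerates at $E_2$ and unwinds into the Wang-type exact sequence
\[\cdots\to H^{n-1}(\mc B_\Gamma,k)\xrightarrow{\ \theta\ }H^{n-1}(\mc B_\Gamma,k)\xrightarrow{\ \cup[\chi_\Gamma]\ }H^{n}(\li_\Gamma,k)\xrightarrow{\ \res\ }H^{n}(\mc B_\Gamma,k)\xrightarrow{\ \theta\ }H^{n}(\mc B_\Gamma,k)\to\cdots,\]
in which $\res$ is restriction, the degree-raising map is cup product with the class of the $1$-cocycle $\chi_\Gamma\in H^1(\li_\Gamma,k)$, and $\theta$ is the operator induced on $H^\bu(\mc B_\Gamma,k)$ by $\ad(v)$. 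This is the Lie-algebraic counterpart of the group-theoretic statement recorded above, where one instead invokes the May spectral sequence.

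The decisive point is that $\theta=0$, and this is where the hypothesis is used. By Theorem~\ref{thm:BB_FPn} and the remark preceding this corollary (\cite[Cor.~2.7]{sb_kosz}), acyclicity of $\Delta_\Gamma$ over $k$ makes $\mc B_\Gamma$ a Koszul Lie algebra; consequently $H^q(\mc B_\Gamma,k)=\ext^q_{\mc U(\mc B_\Gamma)}(k,k)$ is concentrated in internal (weight) degree $q$, for every $q$. On the other hand $\mc B_\Gamma$ carries the grading it inherits from $\li_\Gamma$, and $\ad(v)$---the adjoint action of the weight-$1$ element $v$---is homogeneous of internal degree $+1$; hence so is the induced operator $\theta$ on each $H^q(\mc B_\Gamma,k)$. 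An operator of nonzero internal degree on a module supported in a single internal degree vanishes, so $\theta=0$, and the Wang sequence splits into short exact sequences
\[0\longrightarrow H^{n-1}(\mc B_\Gamma,k)\xrightarrow{\ \cup[\chi_\Gamma]\ }H^{n}(\li_\Gamma,k)\xrightarrow{\ \res\ }H^{n}(\mc B_\Gamma,k)\longrightarrow0.\]
In particular $\res\colon H^\bu(\li_\Gamma,k)\to H^\bu(\mc B_\Gamma,k)$ is a surjective homomorphism of graded $k$-algebras.

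It remains to identify $\ker(\res)$ with the ideal $(\chi_\Gamma)=\chi_\Gamma\cdot H^\bu(\li_\Gamma,k)$ generated by the class of $\chi_\Gamma$. From the short exact sequences, $\ker(\res)$ in degree $n$ is the image of the edge map $H^{n-1}(\mc B_\Gamma,k)\to H^{n}(\li_\Gamma,k)$; unwinding it, this map sends a class to $[\chi_\Gamma]\cup\tilde\alpha$ for any lift $\tilde\alpha\in H^{n-1}(\li_\Gamma,k)$ of it along $\res$, a choice that is immaterial because two lifts differ by an element of $[\chi_\Gamma]\cup H^{n-2}(\li_\Gamma,k)$, which is killed by $[\chi_\Gamma]$ since $[\chi_\Gamma]^2=0$. (This vanishing holds in every characteristic: $\chi_\Gamma$ is represented by a $1$-cochain $\alpha$ of the Chevalley--Eilenberg complex of $\li_\Gamma$, and $\alpha\wedge\alpha=0$.) As $\res$ is onto, this image equals $[\chi_\Gamma]\cup H^{n-1}(\li_\Gamma,k)$; summing over $n$ gives $\ker(\res)=(\chi_\Gamma)$, so $\res$ descends to an isomorphism $H^\bu(\li_\Gamma,k)/(\chi_\Gamma)\xrightarrow{\ \sim\ }H^\bu(\mc B_\Gamma,k)$. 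The last assertion is then immediate: a Lie algebra is Koszul exactly when the quadratic dual $\mc U(\mc B_\Gamma)^{!}\cong H^\bu(\mc B_\Gamma,k)$ of its enveloping algebra is a Koszul algebra.

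The step I expect to require the most care is the vanishing $\theta=0$: one must be sure that the weight grading genuinely obstructs the coadjoint action---that is, that the grading $\mc B_\Gamma$ inherits from $\li_\Gamma$ is precisely the one with respect to which Koszulity makes $H^\bu(\mc B_\Gamma,k)$ diagonal and with respect to which $\ad(v)$ has nonzero degree---so that the acyclicity hypothesis is really what is doing the work. A minor technical point is the characteristic-$2$ case when invoking $[\chi_\Gamma]^2=0$, which is why one argues at the level of cochains rather than with graded-commutativity of $H^\bu(\li_\Gamma,k)$.
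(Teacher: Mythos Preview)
Your argument is correct, and it takes a genuinely different route from the paper. The paper does not give an explicit proof environment for this Corollary; the justification is scattered across the surrounding text. The paper's implicit argument is purely Koszul--duality theoretic: from the preceding line, acyclicity of $\Delta_\Gamma$ forces $\mc B_\Gamma$ to be Koszul (\cite[Cor.~2.7]{sb_kosz}); hence $\mc B_\Gamma$ is quadratic and $H^\bu(\mc B_\Gamma,k)$ coincides with the quadratic dual of (the enveloping algebra of) its quadratic cover, which the paper immediately afterwards identifies, by citation to \cite{sb_kosz}, with $A_\Gamma=H^\bu(\li_\Gamma,k)/(\chi_\Gamma)$. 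Koszulity of $H^\bu(\mc B_\Gamma,k)$ then drops out of Koszul duality (or \cite[Ch.~2, Cor.~3.3]{pp}).

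Your approach instead runs the Hochschild--Serre/Wang sequence of the extension $0\to\mc B_\Gamma\to\li_\Gamma\to k\to 0$ and uses Koszulity only to kill the monodromy operator $\theta$ via the internal-degree argument; the identification with $H^\bu(\li_\Gamma,k)/(\chi_\Gamma)$ then comes out of the resulting short exact sequences, with the isomorphism explicitly induced by restriction. Both proofs hinge on the same input (Koszulity of $\mc B_\Gamma$), but the paper black-boxes the computation of the quadratic dual into a citation, whereas yours is self-contained and makes the map visible. Your caution about $[\chi_\Gamma]^2=0$ in characteristic $2$ is well placed; note, incidentally, that since $H^\bu(\li_\Gamma,k)$ is the exterior Stanley--Reisner ring (as the paper also records), $\chi_\Gamma^2=0$ holds there directly. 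One small notational point: the paper writes the ideal as $(\chi_\Gamma\cdot H^1(\li_\Gamma,k))$, but Corollary~\ref{cor:coho} and the Introduction make clear that the ideal generated by $\chi_\Gamma$ itself is intended, which is what you prove.
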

		
		Since the quadratic dual of the quadratic cover of $\mc B_\Gamma$ is isomorphic to $H^\bu(\li_\Gamma,k)/(\chi_\Gamma\cdot H^1(\li_\Gamma,k))=:A_\Gamma$ (see \cite{sb_kosz}), it	follows from \cite[Ch. 2, Cor. 3.3.]{pp} that $A_\Gamma$ is Koszul when so is $\mc B_\Gamma$. In particular, since $H^\bu(\li_\Gamma,k)$ is the Stanley-Reisner ring of the opposite graph of $\Gamma$, we get
		\begin{cor}\label{cor:coho}
			Let $\Gamma$ be a finite graph whose flag complex $\Delta_\Gamma$ is acyclic over $k$. 
			Then, quotient $A_\Gamma$ of the exterior $k$-algebra generated by the vertices of a finite simplicial graph $\Gamma$ by the ideal generated by the elements $\sum_{v\in V(\Gamma)}v$ and $x\wedge y$, for $\{x,y\}\notin E(\Gamma)$, is Koszul.
		\end{cor}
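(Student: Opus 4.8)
The plan is to assemble the statement from the ingredients already recorded above, so the argument will be short. First, write $\bar\chi_\Gamma$ for the class in $H^1(\li_\Gamma,k)$ of the length character $\chi_\Gamma$. Under the identification of $H^\bu(\li_\Gamma,k)$ with the exterior Stanley--Reisner algebra of the flag complex of $\Gamma$ --- that is, with the quotient of $\Lambda(V(\Gamma))$ by the monomials $x\wedge y$ for $\{x,y\}\notin E(\Gamma)$ --- the element $\bar\chi_\Gamma$ is exactly $\sum_{v\in V(\Gamma)} v$. Hence the algebra $A_\Gamma = H^\bu(\li_\Gamma,k)/(\bar\chi_\Gamma)$ of the preceding discussion is precisely the quotient of $\Lambda(V(\Gamma))$ by the ideal generated by $\sum_{v\in V(\Gamma)} v$ together with all $x\wedge y$ for $\{x,y\}\notin E(\Gamma)$, i.e.\ the algebra in the statement. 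So it only remains to show that this $A_\Gamma$ is Koszul under the acyclicity hypothesis.

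For that I would use that, by \cite[Cor. 2.7]{sb_kosz}, acyclicity of $\Delta_\Gamma$ over $k$ is equivalent to $\mc B_\Gamma$ being a Koszul Lie $k$-algebra. In particular $\mc B_\Gamma$ is quadratic, so it coincides with its own quadratic cover, and therefore the quadratic dual of its quadratic cover is simply $\mc B_\Gamma^!$. By the remark preceding the statement this dual is $A_\Gamma$, while by the corollary just above it is also $H^\bu(\mc B_\Gamma,k)$; in particular $A_\Gamma \cong H^\bu(\mc B_\Gamma,k)$ is the quadratic dual of the Koszul algebra $\mc B_\Gamma$, hence Koszul by \cite[Ch. 2, Cor. 3.3]{pp}. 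Combined with the identification of the first paragraph, this gives the statement.

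I do not expect a genuine obstacle: the corollary is a repackaging of \cite[Cor. 2.7]{sb_kosz}, the self-duality of Koszulness under quadratic duality \cite[Ch. 2, Cor. 3.3]{pp}, and the Stanley--Reisner description of $H^\bu(\li_\Gamma,k)$. The one point that deserves care is the bookkeeping step where ``the quadratic dual of the quadratic cover of $\mc B_\Gamma$'' collapses to $H^\bu(\mc B_\Gamma,k)$: this collapse is exactly what Koszulness of $\mc B_\Gamma$ provides, since in general the quadratic cover need not agree with $\mc B_\Gamma$ itself.
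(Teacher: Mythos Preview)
Your proposal is correct and follows essentially the same route as the paper: identify $A_\Gamma$ with $H^\bu(\li_\Gamma,k)/(\chi_\Gamma)$ via the Stanley--Reisner description, use \cite[Cor.~2.7]{sb_kosz} to get Koszulness of $\mc B_\Gamma$ from acyclicity of $\Delta_\Gamma$, and then invoke \cite[Ch.~2, Cor.~3.3]{pp} to transfer Koszulness to the quadratic dual. The only addition is that you spell out the step ``Koszul $\Rightarrow$ quadratic $\Rightarrow$ the quadratic cover is $\mc B_\Gamma$ itself,'' which the paper leaves implicit when it says ``$A_\Gamma$ is Koszul when so is $\mc B_\Gamma$.''
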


	In their seminal work \cite{bb}, Bestvina and Brady characterized the finite presentability of $B_\Gamma$ in terms of the homotopy type of the flag complex of the underlying graph. 
	
	\begin{thm}\label{thm:simply}
		Let $\Gamma$ be a finite graph. Then, $B_\Gamma$ is finitely presented if, and only if, the flag complex of $\Gamma$ is simply connected. 
	\end{thm}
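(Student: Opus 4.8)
The plan is to reconstruct Bestvina--Brady's argument via discrete Morse theory on the universal cover of the Salvetti complex. Let $S_\Gamma$ be the Salvetti cube complex of $\Gamma$ --- one vertex, an oriented $1$-cube for each vertex of $\Gamma$, and a $k$-cube for each $k$-clique --- which is a compact $K(G_\Gamma,1)$, and let $X_\Gamma$ be its universal cover: a CAT(0) cube complex, hence contractible, on which $G_\Gamma$ acts freely and cocompactly. The length character induces an affine ``height'' function $f\colon X_\Gamma\to\dbR$ with $f(g\cdot x)=f(x)+\chi_\Gamma(g)$; since $B_\Gamma=\ker\chi_\Gamma$, the function $f$ is $B_\Gamma$-invariant, and because $G_\Gamma$ acts cocompactly and $\chi_\Gamma$ is integer-valued, only finitely many $B_\Gamma$-orbits of cells meet any slab $Y_n:=f^{-1}([-n,n])$; hence $B_\Gamma$ acts freely and cocompactly on each $Y_n$, and $\bigcup_n Y_n = X_\Gamma$. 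One checks that $f$ is a Morse function (each cube has a unique highest and a unique lowest vertex) and that at every vertex of $X_\Gamma$ both the ascending link and the descending link are isomorphic to the flag complex $\Delta_\Gamma$: the down-directions at a vertex are indexed by $V(\Gamma)$, and a set of them spans a simplex exactly when the corresponding vertices span a clique. This identification is what transports the topology of $X_\Gamma$ into the combinatorics of $\Gamma$.

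For the ``if'' direction, suppose $\Delta_\Gamma$ is simply connected; this already forces $\Gamma$ connected. By the Morse Lemma, $Y_{n+1}$ has the homotopy type of $Y_n$ with a cone on $\mathrm{lk}_\downarrow(v)$ attached for each vertex $v$ with $f(v)\in(n,n+1]$ and a cone on $\mathrm{lk}_\uparrow(v)$ attached for each $v$ with $f(v)\in[-n-1,-n)$, i.e.\ with cones on copies of $\Delta_\Gamma$. Since $\Delta_\Gamma$ is connected and simply connected, van Kampen's theorem shows each inclusion $Y_n\hookrightarrow Y_{n+1}$ induces an isomorphism on $\pi_1$; as $\pi_1$ commutes with the union $\bigcup_n Y_n = X_\Gamma$ and $X_\Gamma$ is simply connected, every $Y_n$ is simply connected. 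Then $B_\Gamma$ acts freely and cocompactly on the simply connected complex $Y_1$, so the compact quotient $Y_1/B_\Gamma$ has fundamental group $B_\Gamma$; a group that is the fundamental group of a finite complex is finitely presented, so $B_\Gamma$ is.

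For the converse, assume $B_\Gamma$ is finitely presented. Then $B_\Gamma$ is of type $\mathrm{FP}_2$, so Theorem \ref{thm:BB_FPn} forces $\Delta_\Gamma$ to be $1$-acyclic, in particular connected; it remains to show $\pi_1(\Delta_\Gamma)=1$. This cannot come from homology alone, because the flag complex of a graph may be acyclic with non-trivial (necessarily perfect) fundamental group, so the Morse machinery must be pushed to the level of $\pi_1$. By Brown's criterion, finite presentability forces the direct system $\{\pi_1(Y_n)\}$, with the inclusions above, to be essentially trivial: for every $n$ there is $m\ge n$ with $\pi_1(Y_n)\to\pi_1(Y_m)$ the trivial homomorphism. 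If $\gamma$ represented a non-trivial class of $\pi_1(\Delta_\Gamma)$, one realizes $\gamma$ and all its $B_\Gamma$-translates inside a single slab $Y_{n_0}$ (inside an ascending link sitting there); using the Morse description of the inclusions $Y_{n_0}\hookrightarrow Y_m$ one argues that no single $Y_m$ fills all these translates at once --- at each stage only finitely many of the relevant links have been coned off, while the translates persist --- so the system $\{\pi_1(Y_n)\}$ is not essentially trivial, a contradiction. Hence $\pi_1(\Delta_\Gamma)=1$.

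I expect this last step to be the main obstacle. The ``if'' direction and the reduction of the converse to an acyclic, non-simply-connected flag complex (via Theorem \ref{thm:BB_FPn}) are formal once the Morse Lemma and the link computation are available; the delicate point is to manufacture, from a non-trivial loop of $\Delta_\Gamma$, a genuinely non-trivial element of $\varinjlim \pi_1(Y_n)$ --- equivalently, to rule out that some bounded band $f^{-1}([-m,m])$ simultaneously fills a loop and all of its $B_\Gamma$-translates. This is precisely where the hypothesis ``simply connected'' does work that ``$1$-acyclic'' cannot, and it requires the equivariant distance/area bookkeeping on null-homotopies that forms the technical heart of Bestvina--Brady's proof.
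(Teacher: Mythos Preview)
The paper does not give a proof of this theorem: it is stated (with attribution to Bestvina and Brady~\cite{bb}) as a known result and then used. So there is no ``paper's own proof'' to compare against; what you have written is a sketch of the original Bestvina--Brady argument itself.

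Your ``if'' direction is correct and is exactly their argument: the ascending and descending links are copies of $\Delta_\Gamma$, the Morse lemma plus van~Kampen give $\pi_1(Y_n)\xrightarrow{\ \sim\ }\pi_1(Y_{n+1})$ when $\Delta_\Gamma$ is simply connected, and cocompactness of the $B_\Gamma$-action on a simply connected $Y_n$ yields a finite presentation.

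For the converse, you correctly isolate the real difficulty and are honest that the last paragraph is only a plan, not a proof. Two comments. First, the appeal to Theorem~\ref{thm:BB_FPn} to get ``$\Delta_\Gamma$ is $1$-acyclic, hence connected'' is fine but unnecessary: Brown's criterion already gives essential triviality of $\{\pi_0(Y_n)\}$, which forces $\Delta_\Gamma$ connected by the same Morse analysis. Second, the heuristic ``at each stage only finitely many links have been coned off, while the translates persist'' is not in itself an argument: when passing from $Y_n$ to $Y_m$ you cone off \emph{infinitely} many links (one per vertex at each intermediate height), and a given loop could in principle die in the normal closure generated by all of them. What Bestvina--Brady actually do is construct, for each integer height, an embedded ``sheet'' carrying a retraction, and use these to produce, from a nontrivial $[\gamma]\in\pi_1(\Delta_\Gamma)$, loops in $Y_0$ of arbitrarily large filling height; this is the ``equivariant distance/area bookkeeping'' you allude to, and it is genuinely the technical core. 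Your outline points at the right mechanism but does not yet supply it.
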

	Notice that the same result does not hold for Bestvina-Brady Lie algebras, as, for graded Lie algebras over a field, being of type FP$_2$ and being finitely presented are equivalent conditions (see \cite{weig}). In fact, by Theorem \ref{thm:BB_FPn}, $\mc B_\Gamma$ is finitely presented if, and only if, the first homology group over $k$ of the flag complex of $\Gamma$ vanishes, the latter being a weaker condition than simple connectedness.
		
		\subsection{Lower central series}

For a group $G$, denote by $\gamma_n(G)$ the lower central series of $G$, i.e., $\gamma_1(G)=G$ and $\gamma_{n+1}(G)=[G,\gamma_n(G)]$. 
The associated graded object \[\gr^\gamma G=\bigoplus_{n\geq 1}\gamma_n(G)/\gamma_{n+1}(G)\] is a Lie ring, where the Lie brackets are induced by the commutator map $(g,h)\mapsto [g,h]=ghg^{-1}h^{-1}$.
 Similarly, if $G$ is a pro-$p$ group, then $\gr^\zeta G$ is computed in terms of the Jennings-Zassenhaus series, and is a $p$-restricted Lie algebra.
Since the filtration is clear from the context, we will drop the superscript and denote both these graded objects by $\gr G$.

As an example, if $\Gamma$ is a finite graph, then $\gr G_\Gamma$ is isomorphic with the RAAG Lie ring $\li_\Gamma(\Z)$ (see \cite{psRAAG}), and $\gr G_{\Gamma,p}\simeq \li_{\Gamma,p}$ (see \cite{barth}). 
In the same way, Bestvina-Brady groups and Lie algebras are related by the following result, which is an easy consequence of \cite[Thm. 5.6]{psBB}.
		\begin{lem}
			Let $\Gamma$ be a finite simplicial graph. If $\Gamma$ is connected, then $\gr B_\Gamma$ is naturally isomorphic with the Lie ring $\mc B_\Gamma(\Z)$. Similarly, $\gr B_{\Gamma,p}\simeq \mc B_{\Gamma,p}$.
		\end{lem}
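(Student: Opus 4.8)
The plan is to compare the lower central series of $B_\Gamma$ with that of the ambient RAAG $G_\Gamma$. The inclusion $\iota\colon B_\Gamma\hookrightarrow G_\Gamma$ carries $\gamma_n(B_\Gamma)$ into $\gamma_n(G_\Gamma)$, so it induces a morphism of graded Lie rings $\gr(\iota)\colon\gr B_\Gamma\to\gr G_\Gamma$. By \cite{psRAAG} one may identify $\gr G_\Gamma$ with $\li_\Gamma(\Z)$, and under this identification the map induced on associated graded rings by $\chi_\Gamma\colon G_\Gamma\to\Z$ is the length character $\chi_\Gamma\colon\li_\Gamma(\Z)\to\Z$: it is concentrated in degree $1$ (because $\Z$ is abelian), where it sends each vertex to $1$. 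Since $\iota$ factors through $\ker\chi_\Gamma$, the morphism $\gr(\iota)$ takes values in $\ker(\chi_\Gamma\colon\li_\Gamma(\Z)\to\Z)=\mc B_\Gamma(\Z)$, producing a canonical morphism $\phi\colon\gr B_\Gamma\to\mc B_\Gamma(\Z)$, and the task is to prove $\phi$ is an isomorphism. For the pro-$p$ statement one repeats the construction: $B_\Gamma=\bigcap_n\ker(G_\Gamma\to\Z/p^n\Z)$ is closed in the pro-$p$ topology of $G_\Gamma$, so $B_{\Gamma,p}$ is the closure of $B_\Gamma$ inside $G_{\Gamma,p}$, and using $\gr G_{\Gamma,p}\cong\li_{\Gamma,p}$ (see \cite{barth}) the inclusion $B_{\Gamma,p}\hookrightarrow G_{\Gamma,p}$ induces a morphism of graded $p$-restricted Lie algebras $\phi_p\colon\gr B_{\Gamma,p}\to\mc B_{\Gamma,p}$.

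Surjectivity of $\phi$ (and of $\phi_p$) is formal. For any group the relation $\gamma_n=[\gamma_1,\gamma_{n-1}]$ shows that $\gr B_\Gamma$ is generated by $\gr_1 B_\Gamma=B_\Gamma^{\mathrm{ab}}$, so it suffices to check that $\phi$ is onto in degree $1$. There $\phi$ is the map $B_\Gamma^{\mathrm{ab}}\to(\mc B_\Gamma(\Z))_1=\ker(\Z^V\to\Z)$ induced by $B_\Gamma\hookrightarrow G_\Gamma\twoheadrightarrow G_\Gamma^{\mathrm{ab}}=\Z^V$; its image contains every $e_v-e_w$, the image of the element $vw^{-1}\in B_\Gamma$, and these span $\ker(\Z^V\to\Z)$. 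The pro-$p$ case is identical after tensoring with $\F_p$, using that $\gr_\zeta B_{\Gamma,p}$ is generated in degree $1$ by $H^1(B_{\Gamma,p},\F_p)$ as a restricted Lie algebra.

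It remains to prove injectivity, equivalently that $\gamma_n(B_\Gamma)=B_\Gamma\cap\gamma_n(G_\Gamma)$ for every $n$. This is the one substantial point, and it is exactly here that connectedness of $\Gamma$ is used: for a disconnected $\Gamma$ the group $B_\Gamma$ is not even finitely generated and $\phi$ already fails to be injective in degree $1$ (for two isolated vertices $v,w$, the bracket $[v,w]$ survives in $\mc B_\Gamma$ while $B_\Gamma^{\mathrm{ab}}$ has infinite rank). The comparison $\gamma_n(B_\Gamma)=B_\Gamma\cap\gamma_n(G_\Gamma)$ for connected $\Gamma$ is precisely what \cite[Thm.\ 5.6]{psBB} provides; it shows in particular that $\gr B_\Gamma$ is torsion-free and is presented by the relations cutting $\mc B_\Gamma(\Z)$ out of $\li_\Gamma(\Z)$, so that $\phi$ is a surjection of graded torsion-free $\Z$-modules of the same finite rank in each degree, hence an isomorphism. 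For $B_{\Gamma,p}$ one runs the same argument with the Jennings--Zassenhaus filtration: the filtration on $G_{\Gamma,p}$ induces one on the closed subgroup $B_{\Gamma,p}$, and, reducing the integral comparison modulo $p$ and using $\gr G_{\Gamma,p}\cong\li_{\Gamma,p}$, this induced filtration coincides with the genuine Jennings--Zassenhaus filtration of $B_{\Gamma,p}$, whence $\gr B_{\Gamma,p}\cong\mc B_{\Gamma,p}$.

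The main obstacle is injectivity. A priori one only has $\gamma_n(B_\Gamma)\subseteq B_\Gamma\cap\gamma_n(G_\Gamma)$, and the inclusion can be strict: writing $G_\Gamma=B_\Gamma\rtimes\langle v_0\rangle$ for a vertex $v_0$, commutators involving the complementary $\langle v_0\rangle$-factor need not lie in $\gamma_n(B_\Gamma)$ — this already happens when $G_\Gamma$ is free, i.e. when $\Gamma$ is an edge-free graph. Everything else (constructing $\phi$, surjectivity, and reducing the pro-$p$ case to the discrete one) is formal once $\gr G_\Gamma\cong\li_\Gamma(\Z)$ is in hand; the genuine work of controlling the filtration on $B_\Gamma$ — via the chain-level analysis of the Bestvina--Brady level set — is done in \cite{psBB}, which is why the lemma is an easy consequence of that result.
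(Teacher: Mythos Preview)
Your approach matches the paper's: both reduce the lemma to \cite[Thm.~5.6]{psBB}, and the paper gives no further argument beyond that citation. Your elaboration is essentially correct, but one point is misstated. Surjectivity of $\phi$ is \emph{not} formal: from ``$\gr B_\Gamma$ is generated in degree~$1$ and $\phi$ is onto in degree~$1$'' you only get that the image of $\phi$ is the Lie subalgebra of $\mc B_\Gamma(\Z)$ generated by its degree-$1$ part, and this equals $\mc B_\Gamma(\Z)$ exactly when $\mc B_\Gamma(\Z)$ is standard --- which again needs $\Gamma$ connected (in your own two-vertex example $[(\mc B_\Gamma)_1,(\mc B_\Gamma)_1]=0$ while $[v,w]\neq 0$ in $(\mc B_\Gamma)_2$, so $\phi$ fails to be surjective there, not just injective). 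The clean way to organise the argument is to note that once \cite{psBB} gives $\gamma_n(B_\Gamma)=B_\Gamma\cap\gamma_n(G_\Gamma)$, surjectivity in degrees $n\geq 2$ is automatic because $\gamma_n(G_\Gamma)\subseteq\ker\chi_\Gamma=B_\Gamma$, so $\phi_n$ identifies $\gr_n B_\Gamma$ with $\gr_n G_\Gamma=(\li_\Gamma)_n=(\mc B_\Gamma)_n$; both halves of the isomorphism thus come from the same input.
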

		
		It follows that, for a connected graph $\Gamma$ and a field $k$, the $k$-cohomology ring of the (pro-$p$) RAAG $B_\Gamma$ can be computed in terms of the bigraded algebra $H^\bbu(\mc B_\Gamma,k)$ via a distinguished spectral sequence discovered by J.P. May (see \cite{may}). For instance, if $\mc B_\Gamma$ is a Koszul Lie algebra over a field $k$, then we get a $k$-algebra isomorphism $H^\bu(B_\Gamma,k)\simeq H^\bu(\mc B_\Gamma,k)$, proving that $B_\Gamma$ has Koszul cohomology over $k$.

		\begin{examples}\label{ex:counter}
		(1) Since the gem graph $\nabla$ is a cone on the line graph $L_3$ of length $3$, we get a decomposition $\mf R_\nabla=\mf R_{L_3}\times k$. It also follows that $\mf B_\nabla=\mf R_{L_3}$, and hence $\mf B_\nabla$ does not satisfy (1)--(7) of Theorem \ref{thm:bkraags}. 
		
		(2) Since the flag complex of the overlapping-gems graph $\bar H$ is contractible, by Theorems \ref{thm:BB_FPn} and \ref{thm:simply}, $\mf B_{\bar H}$ is finitely presented. We get the presentation (see \cite[Cor. 2.3]{psBB})
		\[\mf B_{\bar H}=\pres{e_1,e_2,e_3,e_4,e_5}{[e_1,e_2],[e_2,e_3],[e_2,e_4],[e_3,e_4],[e_4,e_5]}\]
		In particular, $\mf B_{\bar H}$ is a RAAG, with underlying graph that is not Droms (Figure \ref{fig:overgembb}). 
		 \end{examples}

		 	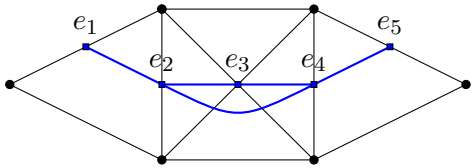
\begin{figure}[H]
		 	\centering
		 		\begin{tikzpicture}
		 		\draw (-2,1) -- (0,2) -- (2,2) -- (4,1) -- (2,0) -- (0,0) -- (-2,1);
		 		
		 		\draw (0,0) -- (0,2) -- (2,0) -- (2,2) -- (0,0);
		 		\draw[blue, thick] (-1,1.5) -- (0,1) -- (1,1) -- (2,1) -- (3,1.5);
		 		\draw[blue,thick] (0,1) .. controls (1,0.5) .. (2,1);

		 		\node [fill=black!100,circle,scale=0.3,draw] at (0,0) {};
		 		\node [fill=black!100,circle,scale=0.3,draw] at (2,0) {};
		 		\node [fill=black!100,circle,scale=0.3,draw] at (2,2) {};
		 		\node [fill=black!100,circle,scale=0.3,draw] at (0,2) {};
		 		\node [fill=black!100,circle,scale=0.3,draw] at (0,2) {};
		 		\node [fill=black!100,circle,scale=0.3,draw] at (-2,1) {};
		 		\node [fill=black!100,circle,scale=0.3,draw] at (4,1) {};
		 		\node[fill=blue!,rectangle,scale=0.3,draw,"$e_1$"] at (-1,1.5) {};
		 		\node[fill=blue!,rectangle,scale=0.3,draw,"$e_2$"] at (0,1) {};
		 		\node[fill=blue!,rectangle,scale=0.3,draw,"$e_3$"] at (1,1) {};
		 		\node[fill=blue!,rectangle,scale=0.3,draw,"$e_4$"] at (2,1) {};
		 		\node[fill=blue!,rectangle,scale=0.3,draw,"$e_5$"] at (3,1.5) {};
		 	\end{tikzpicture}
		 	\caption{The overlapping-gems graph $\bar H$ (in black) and the defining graph of $\mf B_{\bar H}$ (in blue).}\label{fig:overgembb}
		 \end{figure}

	\section{Graph characterization}
	Many classes of graphs are defined in terms of forbidden induced subgraphs. For instance, a finite simplicial graph $\Gamma$ is chordal if it contains no induced $n$-cycle for $n\geq 4$, i.e., chordal graphs are $(C_n)_{n\geq 4}$-free. Similarly, $\Gamma$ is a Droms graph if it does not contain any induced square nor line $L_3$ of length $3$: it is $(L_3,C_4)$-free. Sometimes,
	a family $C$ of graphs with forbidden induced subgraphs is closed with respect to some non-trivial operations. This means that $C$ has a subset $\Sigma$ of \textit{building-blocks} that generates $C$ by iterating those operations. If $\Sigma\neq C$, then we say that the family $C$ has a \textbf{defining construction} in terms of operations and building blocks $\Sigma$.
	For example, Droms graphs  can be obtained by applying cones and disjoint unions on single vertices \cite{droms}. Another class of graphs admitting a defining construction is that of ptolemaic graphs. 
	
	\subsection{Ptolemaic graphs}

	A graph $\Gamma$ is said to be \textbf{ptolemaic} if it is connected, chordal and it does not contain any induced gem $\nabla$. 
	Ptolemaic graphs form an interesting and well studied class of graph for they are distance-hereditary, as it was proved in \hspace{1pt}\cite{ptol}. The same work characterizes ptolemaic graphs in terms of a defining construction that we recall here.
	
	The operations are given by adding either a leaf or \textit{any kind} of twin, with the exception that a false-twin can only be attached to a vertex that has complete neighbourhood.
	
	\begin{enumerate}
		\item A \textbf{leaf} is a vertex that is adjacent to a single other vertex of the graph. 
		\item A (true) \textbf{twin} is a vertex that has the same neighbourhood as that of another vertex. 
		\item A \textbf{false-twin} is a vertex that shares the neighbourhood with another vertex to which it is not adjacent. 
	\end{enumerate}
	
	\begin{lem}[Bandelt, Mulder \hspace{1pt}\cite{ptol}]
		Ptolemaic graphs have a defining construction given by the above operations and single vertices as building blocks, i.e., every finite ptolemaic graph is obtained from the single vertex $K_1$ by iterating the operations of adding leaves or twins.
	\end{lem}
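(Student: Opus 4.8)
The plan is to prove the two implications separately and combine them by induction on the number of vertices. Throughout I read ``complete neighbourhood'' as ``non-empty clique'', so that the construction never leaves the class of connected graphs (starting from $K_1$, a false twin is attached only once the graph already has an edge). First I would verify that each of the three operations sends a ptolemaic graph to a ptolemaic graph, i.e.\ preserves connectedness, chordality, and the absence of an induced gem $\nabla$; this already shows that every graph obtained from $K_1$ by the operations is ptolemaic. Then I would establish the converse pruning statement: every finite ptolemaic graph on at least two vertices contains a vertex $v$ that is a leaf, a true twin, or a false twin (attached to a vertex with complete neighbourhood) of some other vertex $u$, and whose removal leaves a ptolemaic graph. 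Inducting on $|V(\Gamma)|$ and re-attaching $v$ by the corresponding operation then exhibits $\Gamma$ via the construction.

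The first step is a routine case analysis with forbidden induced subgraphs. A freshly added leaf has degree $1$, hence cannot lie on an induced cycle of length $\geq 4$ nor in an induced gem (every vertex of those has degree $\geq 2$), so chordality and gem-freeness are inherited and connectedness is clear. For a true twin $v'$ of $v$ one has $N[v']=N[v]$; since any two vertices of an induced $C_n$ with $n\geq 4$, or of $\nabla$, have distinct closed neighbourhoods, such a subgraph cannot contain both $v$ and $v'$, and replacing $v'$ by $v$ turns a forbidden subgraph through $v'$ into one already present, a contradiction. For a false twin $v'$ of $v$ with $N(v)$ a non-empty clique, any two neighbours of $v'$ are adjacent, which rules out $v'$ being a degree-$2$ vertex of an induced cycle of length $\geq 4$; and inspecting the five possible positions of $v'$ inside a hypothetical induced gem, three of them force two non-adjacent vertices of the gem to be adjacent, while for the two ``path-endpoint'' positions one again replaces $v'$ by $v$ and recovers an induced gem in the original graph. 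Hence all three operations preserve being ptolemaic.

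For the converse I would use that ptolemaic graphs are distance-hereditary \cite{ptol}, together with the pruning property of distance-hereditary graphs: any such graph on $\geq 2$ vertices has a leaf or a twin $v$ of some vertex $u$. If $v$ is a leaf or a true twin of $u$, then $\Gamma-v$ is still connected (re-route any path through $u$), chordal and gem-free, hence ptolemaic, and the inductive hypothesis applies. If $v$ is a false twin of $u$, I would first note that $N(v)=N(u)$ must be a clique: otherwise two non-adjacent $x,y\in N(v)$ would give the induced $4$-cycle $v\,x\,u\,y$, contradicting chordality. It is also non-empty since $\Gamma$ is connected with at least two vertices, so $u$ still has complete neighbourhood in $\Gamma-v$; that graph is ptolemaic, constructible by induction, and re-attaching $v$ as a false twin of $u$ completes the step.

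The only genuinely non-trivial ingredient is the pruning property of distance-hereditary graphs invoked above — equivalently, their one-vertex-extension characterisation — which is where the real work lies and which I would quote from \cite{ptol}. Everything else is bookkeeping with forbidden induced subgraphs, the one mildly delicate point being the gem analysis in the false-twin case.
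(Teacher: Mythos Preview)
The paper does not prove this lemma at all; it is quoted from Bandelt--Mulder \cite{ptol} and used as a black box, so there is no in-paper argument to compare against.

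Your argument is correct and is essentially the standard one. The forward direction (closure under the three operations) is fine; the only spot that is slightly terse is the false-twin/gem case at the path-endpoint positions, where your ``replace $v'$ by $v$'' step tacitly assumes $v$ is not already one of the other four gem vertices --- but that subcase is easily ruled out, since $N(v)=N(v')$ forces adjacencies incompatible with the gem being induced. For the converse you rightly identify the distance-hereditary pruning theorem (every distance-hereditary graph on $\geq 2$ vertices has a leaf or a twin) as the real content, and you derive from chordality that a false twin automatically has a clique neighbourhood; together with the observation that removing a leaf or twin keeps the graph connected, this gives the induction. Note that the pruning theorem you invoke is itself one of the main results of \cite{ptol}, so what you have written is a reduction of one Bandelt--Mulder statement to another rather than a self-contained proof --- perfectly acceptable here, and already more than the paper offers.
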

	
	For instance, block-graphs are ptolemaic graphs; recall that a block-graph is a graph obtained by connecting complete graphs at single common vertices. 
	
We now introduce a generalization of block-graphs, where the building blocks are the connected Droms graphs instead of cliques.
		A \textbf{tree of Droms graphs} is a graph defined by connected sums at vertices of connected Droms graphs.
		In particular, a tree of Droms graphs has a central element iff it is a Droms graph.
		
			\begin{defin}
			Let $\Gamma$ be a connected graph. A vertex $v$ is a \textbf{cut-vertex} for $\Gamma$ if the induced subgraph $\Gamma_v$ of $\Gamma$ obtained by removing $v$ is disconnected. A \textbf{block of $\Gamma$ defined by $v$} is an induced subgraph spanned by $v$ and the vertices of a single connected components of $\Gamma_v$.
		\end{defin}
		
		If $v$ is a cut-vertex of a graph $\Gamma$ with blocks $\Gamma_1,\Gamma_2,\dots$, then $\Gamma$ is the connected sum of the blocks over the vertex $v$ itself, i.e.,  $\Gamma=\Gamma_1\underset{v}{\lor}\Gamma_2\underset{v}{\lor}\dots$
		
		\begin{fact}\label{fact:cv}
			Let $\Gamma'$ be a graph with a cut-vertex $v$. If $v$ is not central in $\Gamma'$, then there is a block $\Gamma_1$ defined by $v$ such that $v$ is neither a cut-vertex nor a central vertex of $\Gamma_1$. 
		\end{fact}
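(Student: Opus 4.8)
The plan is to separate the statement into its two halves and notice that one of them is automatic. For \emph{any} block $\Gamma_1$ of $\Gamma'$ defined by $v$, the graph $\Gamma_1-v$ is \emph{by construction} exactly one connected component of $\Gamma'-v$, hence connected. So $v$ can never be a cut-vertex of a block of $\Gamma'$ defined by $v$, regardless of any hypothesis on $\Gamma'$. Thus the only thing left to arrange is that, for a suitable choice of block, $v$ also fail to be central.

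For that I would use the hypothesis directly. Since $v$ is not central in $\Gamma'$, there is a vertex $w\neq v$ with $\{v,w\}\notin E(\Gamma')$. Let $C$ be the connected component of $\Gamma'-v$ containing $w$, and let $\Gamma_1$ be the block of $\Gamma'$ defined by $v$ spanned by $v$ together with $C$. Then $w$ is a vertex of $\Gamma_1$, and since $\Gamma_1$ is an induced subgraph it inherits the non-adjacency $\{v,w\}\notin E(\Gamma_1)$; hence $v$ is not central in $\Gamma_1$. Combined with the previous paragraph, $\Gamma_1$ is the required block.

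I do not expect any genuine obstacle: the Fact is essentially bookkeeping built on the definition of a block defined by $v$ and the meaning of ``central'' (a vertex adjacent to every other vertex). The only points worth a line of care are that a block, being induced, preserves non-adjacencies of $\Gamma'$, so a non-neighbour of $v$ in $\Gamma'$ stays a non-neighbour of $v$ inside any block containing it; and that the cut-vertex hypothesis ensures $\Gamma'-v$ is nonempty and disconnected, so the component $C$, and hence the block $\Gamma_1$, genuinely exist. The point of isolating this Fact is that it lets one peel off, from a graph with a non-central cut-vertex $v$, a block over $v$ in which $v$ is in neither of the two degenerate positions — precisely what is needed to set up an induction that decomposes such graphs as trees of smaller pieces glued at $v$.
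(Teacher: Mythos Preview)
Your proof is correct and follows essentially the same approach as the paper: first observe that $v$ is never a cut-vertex of any block defined by $v$ (by the very definition of a block), and then use non-centrality of $v$ in $\Gamma'$ to find a block in which $v$ is not central. The only cosmetic difference is that the paper phrases the second step as a contrapositive (``if $v$ were central in every block, it would be central in $\Gamma'$''), whereas you pick a non-neighbour $w$ and take its block directly; these are the same argument.
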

		\begin{proof}
			If $(\Gamma)_{i\in I}$ are the blocks of $\Gamma'$ defined by $v$, then, by definition, $v$ is not a cut-vertex for $\Gamma_i$, $i\in I$. If $v$ were central in $\Gamma_i$, for all $i\in I$, then $v$ would be central in $\Gamma'$.
		\end{proof}
		
			\begin{lem}\label{lem:bkbb}
			Let $\Gamma$ be a tree of Droms graphs. Then, $\mf B_\Gamma$ is a Droms object. 
			\begin{proof}
				We argue by induction on the number of vertices of $\Gamma$. 
				
				If $\Gamma$ is a (connected) Droms graph, then $\mf R_\Gamma$ is a Droms object by Theorem \ref{thm:bkraags}. Since $\Gamma$ is connected, either $\mf B_\Gamma$ is a standard subalgebra of $\li_\Gamma$, or it is a finitely generated subgroup of $G_\Gamma$. Hence, we deduce, by Theorem \ref{thm:bkraags}, that $\mf B_\Gamma$ is a Droms object. 
				
				If $\Gamma$ is not a Droms graph, then, there are two trees of Droms graphs $\Gamma_1$ and $\Gamma_2$ with a common vertex $v$ such that $\Gamma$ is the connected sum of the $\Gamma_i$'s along $v$. By induction, $\mf B_{\Gamma_i}$ are Droms objects, and hence so is their free product $\mf B_\Gamma=\mf B_{\Gamma_1}\amalg\mf B_{\Gamma_2}$.
			\end{proof}
		\end{lem}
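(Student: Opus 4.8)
The plan is to induct on the number of vertices of $\Gamma$, using the recursive description of a tree of Droms graphs as an iterated connected sum at vertices. In the base case $\Gamma$ is itself a connected Droms graph; then $\mf R_\Gamma$ is a Droms object by Theorem \ref{thm:bkraags}, and because $\Gamma$ is connected the kernel $\mf B_\Gamma=\ker\chi_\Gamma$ is a finitely generated subgroup of $G_\Gamma$ in the group cases (Bestvina--Brady) and a standard subalgebra of $\li_\Gamma$ in the Lie cases. Since Droms objects are locally Droms by Theorem \ref{thm:bkraags}, $\mf B_\Gamma$ is then a Droms object.

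For the inductive step I would take $\Gamma$ to be a tree of Droms graphs that is not a single Droms graph; it then has at least two building blocks, so, peeling a leaf block off the defining tree, it splits as a connected sum $\Gamma=\Gamma_1\underset{v}{\lor}\Gamma_2$ of two strictly smaller trees of Droms graphs sharing a cut-vertex $v$. The crucial point is the identification
\[ \mf B_\Gamma \;\cong\; \mf B_{\Gamma_1}\amalg\mf B_{\Gamma_2}. \]
Since $\Gamma$ is the pushout of graphs $\Gamma_1\leftarrow K_1\rightarrow\Gamma_2$, applying $\mf R_{(-)}$ gives $\mf R_\Gamma\cong\mf R_{\Gamma_1}\amalg_{\langle v\rangle}\mf R_{\Gamma_2}$, where the free object $\langle v\rangle$ on $v$ embeds in each $\mf R_{\Gamma_i}$. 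Because $\chi_{\Gamma_i}:\mf R_{\Gamma_i}\to k$ is a split surjection with section $1\mapsto v$, one has $\mf R_{\Gamma_i}\cong\mf B_{\Gamma_i}\rtimes\langle v\rangle$; a routine comparison of presentations then identifies $\mf R_\Gamma\cong(\mf B_{\Gamma_1}\amalg\mf B_{\Gamma_2})\rtimes\langle v\rangle$, with $\langle v\rangle$ acting diagonally and $\chi_\Gamma$ equal to the projection onto the $\langle v\rangle$ factor, so that $\mf B_\Gamma=\ker\chi_\Gamma\cong\mf B_{\Gamma_1}\amalg\mf B_{\Gamma_2}$. (In the group cases one can argue instead via Bass--Serre theory: as $\langle v\rangle\cap\mf B_\Gamma=\langle v\rangle\cap\ker\chi_\Gamma=1$, the group $\mf B_\Gamma$ acts on the Bass--Serre tree of $\mf R_{\Gamma_1}\ast_{\langle v\rangle}\mf R_{\Gamma_2}$ with trivial edge stabilizers, and the free product can be read off.) By the inductive hypothesis each $\mf B_{\Gamma_i}$ is a Droms object, say $\mf B_{\Gamma_i}\cong\mf R_{\Lambda_i}$ with $\Lambda_i$ a Droms graph; hence $\mf B_\Gamma\cong\mf R_{\Lambda_1}\amalg\mf R_{\Lambda_2}=\mf R_{\Lambda_1\sqcup\Lambda_2}$, and a disjoint union of Droms graphs is again a Droms graph, since an induced square or an induced $L_3$ is connected and so already lies in one of the two pieces. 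Thus $\mf B_\Gamma$ is a Droms object, completing the induction.

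The step I expect to be the main obstacle is the identification $\mf B_\Gamma\cong\mf B_{\Gamma_1}\amalg\mf B_{\Gamma_2}$, and in particular establishing it uniformly for each instance of the category $\mathcal A$: one must check that $\mf R_{(-)}$ genuinely turns the vertex-pushout of graphs into an amalgamated coproduct in the relevant category --- for pro-$p$ groups the coproduct and the amalgam are taken in the pro-$p$ sense --- and that the splitting $\mf R_{\Gamma_i}\cong\mf B_{\Gamma_i}\rtimes\langle v\rangle$ together with the presentation bookkeeping persist there, the $p$-restricted Lie case reducing to the Lie case over $\F_p$. Granting this, the base case and the closure of Droms objects under coproducts follow routinely from Theorem \ref{thm:bkraags} and the combinatorics of Droms graphs.
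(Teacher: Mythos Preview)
Your proposal is correct and follows essentially the same route as the paper: induction on the number of vertices, with the base case (connected Droms $\Gamma$) handled via Theorem~\ref{thm:bkraags} and the inductive step via the decomposition $\mf B_\Gamma\cong\mf B_{\Gamma_1}\amalg\mf B_{\Gamma_2}$ coming from a cut-vertex splitting. You in fact supply more justification for this free-product identification (and for closure of Droms graphs under disjoint union) than the paper does, which simply asserts $\mf B_\Gamma=\mf B_{\Gamma_1}\amalg\mf B_{\Gamma_2}$ without further comment.
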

		
		\subsection{The defining construction of trees of Droms graphs}
	We start by showing the following key result.
	\begin{lem}\label{lem:charBKgrph}
		Let $\Gamma$ be a connected chordal $(\bar H,\Delta)$-free graph. Then, $\Gamma$ has either a cut-vertex or a central vertex. 
		\begin{proof}
			We argue by induction on the number of vertices of $\Gamma$. If $\Gamma$ has at most two vertices, then any of them is trivially a central vertex. 
			Suppose that $\Gamma$ has at least $2$ vertices.
			
			Since $\Gamma$ is a chordal graph with no induced gem, it is a Ptolemaic graph, and hence we can use the defining construction of $\Gamma$ by means of leaves and twins. 
			
			\begin{case}
				$\Gamma$ has a leaf.
			\end{case}
			If $\{v,w\}$ is a leaf with $w$ of valency $1$, then $v$ is a cut-vertex for $\Gamma$. 
			\begin{case}
				$\Gamma$ has two (true) twins $v$ and $v'$. 
			\end{case}
			Let $\Gamma'$ be the induced subgraph spanned by all the vertices $\neq v'$. By induction, $\Gamma'$ has either a central vertex or a cut-vertex. 
			
			If $\Gamma'$ has a central vertex $z$, then $z$ is also a central vertex for $\Gamma$, because either $z=v$ or $z$ is adjacent to $v$, and hence, in both cases, $v'$ is adjacent to $z$.
			
			If $\Gamma'$ has no central vertices, then it must have a cut-vertex $w$. Two subcases might occur.
			\begin{enumerate}
				\item \textit{$v$ is not a cut-vertex for $\Gamma'$}. In this case, $w$ is a cut-vertex for $\Gamma$ as well.
				\item \textit{$v=w$ is a cut-vertex for $\Gamma'$}. Since $\Gamma'$ has no central vertex, by Fact \ref{fact:cv}, there is a block $\Gamma_1$ of $\Gamma$ such that $v$ is neither a central nor a cut-vertex for $\Gamma_1$. Let $\Gamma_2$ be the union of the blocks different from $\Gamma_1$. 
				By induction, $\Gamma_1$ contains either a central vertex or a cut-vertex. 
				\begin{enumerate}
					\item If $\Gamma_1$ has no central vertices, then, by induction, it has a cut-vertex $w'$. In particular, $v\neq w'$ and $w'$ is a cut-vertex for $\Gamma$. 
					\item Suppose that $\Gamma_1$ has a central vertex $w_1$, and let $w_2$ be a vertex in $\Gamma_2$ adjacent to $v$. 
					
					If $v$ is a leaf in $\Gamma_1$, then $w_1$ is a cut-vertex for $\Gamma_1$, and hence for $\Gamma$ as in case (2)(a). Now suppose $v$ is not a leaf of $\Gamma_1$, and let $X\neq \emptyset$ be the set of vertices of $\Gamma_1$ that are not adjacent to $v$; in particular, all $x\in X$ lie at distance $2$ from $v$. If there existed a vertex $w_1'\neq w_1$ that is adjacent to both $v$ and a vertex $x\in X$, then $\Gamma$ would have an induced overlapping gems graph $\bar H$ spanned by $\{v,v',w_1,w_1',x,w_2\}$. 
					It follows that all the vertices $w_1'\neq w_1$ of $\Gamma_1$ that are adjacent to $v$ are not adjacent to any vertex of $X$, and hence $w_1$ is a cut-vertex for $\Gamma$. 
					
				\end{enumerate}
			\end{enumerate}
			It remains to consider the last case.
			\begin{case}
				$\Gamma$ has two false twins $v$ and $v'$ (and the neighbourhood of $v$ is a complete graph). 
			\end{case}
			Let $\Gamma'$ be the induced subgraph spanned by all the vertices $\neq v'$. By induction, $\Gamma'$ has either a central vertex or a cut-vertex. 
			\begin{enumerate}
				\item Let $z$ be a central vertex of $\Gamma'$. If $v=z$, then $\Gamma'$ is a complete graph, and any vertex $w\notin\{v,v'\}$ is central in $\Gamma$. 
				
				\noindent If $v\neq z$, then $z$ is adjacent to $v'$ as well, and hence it is central in $\Gamma$. 
				
				\item If $\Gamma'$ has no central vertex, then it must have a cut-vertex $w$. 
				Since the neighbourhood of $v$ is a complete graph, $v$ cannot be a cut-vertex for $\Gamma'$, i.e., $v\neq w$. It follows that $w$ is a cut-vertex of $\Gamma$, and the proof is complete.\end{enumerate}
		\end{proof}
	\end{lem}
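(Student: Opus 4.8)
The plan is to argue by induction on $|V(\Gamma)|$, using the Bandelt--Mulder defining construction of ptolemaic graphs. Since a connected chordal graph without induced gem $\nabla$ is ptolemaic, $\Gamma$ is obtained from $K_1$ by repeatedly adding leaves, true twins, or false twins (the last only onto a vertex with complete neighbourhood). The base case $|V(\Gamma)|\le 2$ is trivial. For the inductive step I would delete the last vertex $v'$ produced by the construction; the remaining induced subgraph $\Gamma'$ is again connected, chordal, gem-free and $\bar H$-free, so the inductive hypothesis applies to it, and also to any smaller induced subgraph I may need along the way. The work is then to promote a cut-vertex or central vertex of $\Gamma'$ to one of $\Gamma$, case by case on the type of the operation that creates $v'$.

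If $\Gamma$ (outside the base case) has a leaf, its unique neighbour is visibly a cut-vertex, so I may assume the added vertex $v'$ is a twin of some $v$. In the \emph{true twin} case, if $\Gamma'$ has a central vertex $z$ then $z$ equals or is adjacent to $v$, and the twin relation forces $z\sim v'$, so $z$ remains central in $\Gamma$; if $\Gamma'$ has a cut-vertex $w\neq v$, then, since the neighbourhood of $v'$ apart from $v$ is contained in that of $v$, adding $v'$ cannot reconnect $\Gamma-w$, so $w$ still separates $\Gamma$. The delicate situation is when $v$ itself is the cut-vertex of $\Gamma'$: here I would use Fact~\ref{fact:cv} to select a block $\Gamma_1$ of $\Gamma'$ in which $v$ is neither central nor a cut-vertex, write $\Gamma_2$ for the union of the remaining blocks, and apply induction to $\Gamma_1$. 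If $\Gamma_1$ has a cut-vertex it is necessarily $\neq v$ and is seen to be a cut-vertex of $\Gamma$; if $\Gamma_1$ has a central vertex $w_1$, let $X\neq\emptyset$ be the set of vertices of $\Gamma_1$ not adjacent to $v$ (all at distance $2$ from $v$, via $w_1$). The crucial point is that if some $w_1'\neq w_1$ were adjacent both to $v$ and to a vertex $x\in X$, then $\{v,v',w_1,w_1',x,w_2\}$ (with $w_2$ a neighbour of $v$ in $\Gamma_2$) would span an induced overlapping-gems graph $\bar H$, a contradiction; hence every $v$-neighbour of $\Gamma_1$ other than $w_1$ avoids $X$, and $w_1$ separates $X$ from the rest of $\Gamma$.

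In the \emph{false twin} case, where the neighbourhood of $v$ is a clique, a central vertex $z$ of $\Gamma'$ again transfers: if $z\neq v$ it is adjacent to $v$, hence to $v'$, and stays central, while if $z=v$ then $\Gamma'$ is complete and any neighbour of $v$ becomes central in $\Gamma$. If instead $\Gamma'$ has a cut-vertex $w$, then $w\neq v$, because a path through $v$ can always be rerouted across the clique $N(v)$, so $v$ is not a cut-vertex of $\Gamma'$; and $w$ remains a cut-vertex of $\Gamma$ since $v'$, having the same neighbourhood as $v$, is absorbed into $v$'s side of $\Gamma-w$.

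I expect the main obstacle to be the true-twin subcase in which $v$ is the cut-vertex of $\Gamma'$: this is the only place where the forbidden subgraph $\bar H$ is genuinely used, and it requires setting up the block decomposition carefully via Fact~\ref{fact:cv}, invoking the inductive hypothesis on the block $\Gamma_1$, and then checking that the six distinguished vertices really do induce an overlapping-gems graph — which is the combinatorial core of the whole argument.
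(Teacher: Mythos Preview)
Your proposal is correct and follows essentially the same route as the paper: induction on $|V(\Gamma)|$ via the Bandelt--Mulder construction of ptolemaic graphs, with the leaf, true-twin, and false-twin cases handled exactly as in the original, including the use of Fact~\ref{fact:cv} and the identification of an induced $\bar H$ on $\{v,v',w_1,w_1',x,w_2\}$ in the critical true-twin subcase. Your case split in the true-twin situation (``some cut-vertex $w\neq v$'' versus ``$v$ is the cut-vertex'') is a minor reorganisation of the paper's dichotomy (``$v$ is not a cut-vertex'' versus ``$v$ is a cut-vertex''), and your omission of the separate ``$v$ is a leaf in $\Gamma_1$'' sub-subcase is harmless since it is absorbed vacuously into your $X$-argument.
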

We can now characterize the trees of Droms graphs in terms of forbidden induced subgraphs.

		\begin{thm}
		Let $\Gamma$ be a finite simplicial graph. Then the following statements are equivalent:
		\begin{enumerate}
			\item $\Gamma$ is a connected chordal graph that does not contain any induced gem $\nabla$ nor overlapping-gems $\bar H$,
			\item $\Gamma$ is a tree of Droms graphs.
		\end{enumerate}
		\begin{proof}
			 If $\Gamma$ is a tree of Droms graphs, then $\mf B_\Gamma$ is a Droms object by Lemma \ref{lem:bkbb}. In particular, $\Gamma$ is a ptolemaic graph with no induced $\bar H$  by Examples \ref{ex:counter}. 
			 
			For the converse, suppose $\Gamma$ is a connected, chordal, $(\bar H,\Delta)$-free graph and argue by induction on the number of vertices. The $1$-vertex case is obvious.
			By Lemma \ref{lem:charBKgrph}, $\Gamma$ has either a central vertex or a cut-vertex. 
			If $\Gamma$ has a cut vertex $v$, then there are two connected subgraphs $\Gamma_1$ and $\Gamma_2$ with common vertex $v$ such that $\Gamma=\Gamma_1\underset{v}{\lor}\Gamma_2$. By induction, both $\Gamma_1$ and $\Gamma_2$ are trees of Droms graphs, and hence so is $\Gamma$. 
			
			If $\Gamma$ has no cut-vertex, let $z$ be a central vertex. Consider the induced subgraph $\Gamma'$ of $\Gamma$ spanned by all the vertices $\neq z$. If $\Gamma'$ is not a Droms graph, then it must contain an induced path of length $3$, and hence its cone $\Gamma$ contains an induced gem, contradicting the hypotheses. It follows that $\Gamma'$ is a Droms graph, and hence so is its cone $\Gamma$.
		\end{proof}
	\end{thm}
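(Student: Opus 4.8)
The plan is to prove the two implications separately: $(2)\Rightarrow(1)$ is forced by the algebraic output of Lemma~\ref{lem:bkbb} together with Examples~\ref{ex:counter}, while $(1)\Rightarrow(2)$ is an induction on the number of vertices driven by the dichotomy of Lemma~\ref{lem:charBKgrph}.

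For $(2)\Rightarrow(1)$, connectivity and chordality are immediate: a tree of Droms graphs is assembled from connected chordal (Droms) graphs by iterated connected sums at a single vertex, and both properties survive that operation. For the forbidden subgraphs, suppose for contradiction that $\nabla$ (resp.\ $\bar H$) is induced in $\Gamma$. Then, by the embedding $\mf B_\Lambda=\mf R_\Lambda\cap\mf B_\Gamma\hookrightarrow\mf B_\Gamma$ valid for any induced $\Lambda$, we obtain a subobject $\mf B_\nabla$ (resp.\ $\mf B_{\bar H}$) of $\mf B_\Gamma$ that is a finitely generated subgroup in the group case and, since $\nabla$ and $\bar H$ are connected, a standard subalgebra in the Lie case. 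By Lemma~\ref{lem:bkbb}, $\mf B_\Gamma$ is a Droms object, hence locally Droms by Theorem~\ref{thm:bkraags}, so this subobject would itself be Droms. But Examples~\ref{ex:counter} identify $\mf B_\nabla$ with $\mf R_{L_3}$ and $\mf B_{\bar H}$ with the RAAG object on the blue graph of Figure~\ref{fig:overgembb}, and neither of these defining graphs is Droms — a contradiction. Hence $\Gamma$ is $(\nabla,\bar H)$-free, which is $(1)$.

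For $(1)\Rightarrow(2)$, induct on $|V(\Gamma)|$; the one-vertex graph is Droms. For the step, $\Gamma$ is connected, chordal and $(\nabla,\bar H)$-free, so by Lemma~\ref{lem:charBKgrph} it has a cut-vertex or a central vertex. If it has a cut-vertex $v$, write $\Gamma=\Gamma_1\underset{v}{\lor}\Gamma_2$ with $\Gamma_1$ one block defined by $v$ and $\Gamma_2$ the union of the remaining blocks; each $\Gamma_i$ is connected, is an induced subgraph of $\Gamma$ (hence still chordal and $(\nabla,\bar H)$-free), and has strictly fewer vertices than $\Gamma$ (a cut-vertex leaves at least two nonempty components), so by induction each $\Gamma_i$ is a tree of Droms graphs and therefore so is $\Gamma=\Gamma_1\underset{v}{\lor}\Gamma_2$. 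If $\Gamma$ has no cut-vertex, fix a central vertex $z$ and let $\Gamma'$ be the induced subgraph on $V(\Gamma)\setminus\{z\}$. Since $\Gamma'$ is chordal, it is Droms if and only if it contains no induced $L_3$; but an induced $L_3$ in $\Gamma'$ together with $z$ (adjacent to everything) would span an induced gem $\nabla$ in $\Gamma$, which is excluded. Hence $\Gamma'$ is Droms, and as $\Gamma$ is the cone over $\Gamma'$ with apex $z$ — and the cone over a Droms graph is Droms, since neither $L_3$ nor $C_4$ admits a vertex adjacent to all the others — $\Gamma$ is a Droms graph, in particular a one-block tree of Droms graphs.

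The genuine difficulty, namely the structural dichotomy feeding the cut-vertex/central-vertex split, is already settled in Lemma~\ref{lem:charBKgrph}; what remains needs only care, not new ideas: that every induced subgraph inherits all the hypotheses (so the induction is legitimate), that in the cut-vertex case the two pieces really cover $\Gamma$ and are strictly smaller, and that trees of Droms graphs are closed under connected sums at a vertex. It is harmless that a cut-vertex may also be central (as in the path $L_2$), since we only need to run one of the two alternatives.
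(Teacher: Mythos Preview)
Your proof is correct and follows essentially the same route as the paper: $(2)\Rightarrow(1)$ via Lemma~\ref{lem:bkbb} and Examples~\ref{ex:counter}, and $(1)\Rightarrow(2)$ by induction using the cut-vertex/central-vertex dichotomy of Lemma~\ref{lem:charBKgrph}, with the central-vertex branch concluding that $\Gamma'=\Gamma\setminus\{z\}$ is Droms because an induced $L_3$ there would cone up to an induced gem. The only cosmetic difference is that for chordality in $(2)\Rightarrow(1)$ you argue combinatorially (connected sums at a vertex preserve chordality), whereas the paper folds this into the algebraic consequence that $\Gamma$ is ptolemaic; both are fine.
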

	
	We deduce Theorem A, analogous to Theorem \ref{thm:bkraags}.
		\begin{thm}\label{thm:bkbb}
		Let $\Gamma$ be a finite simplicial graph. Then, $\Gamma$ is a tree of Droms graph if, and only if, anyone of the following statements holds:
		\begin{enumerate}
			\item All subgroups of $B_\Gamma$ are RAAGs (possibly defined by infinite graphs),
			\item All standard subalgebras of $\mc B_\Gamma$ are RAAG Lie algebras,
			\item The Lie algebra $\mc B_\Gamma$ is BK,
			\item The Bestvina-Brady object $\mf B_\Gamma$ is finitely-generated and locally Droms,
			\item If $k$ is a field, the cohomology $k$-algebra $H^\bu(\mf R_\Gamma,k)$ is universally Koszul,
			\item The pro-$p$ group $ B_{\Gamma,p}$ is BK,
			\item Every closed subgroup of $ B_{\Gamma,p}$ is a pro-$p$ RAAG,
			\item There exists a field $\K$ containing a primitive $p^{th}$ root of $1$ such that $ B_{\Gamma,p}\simeq G_\K(p)$. 
		\end{enumerate}
	\end{thm}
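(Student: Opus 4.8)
The plan is to establish the equivalence in two parts, each a transfer to Theorem~\ref{thm:bkraags}. (Throughout, $\Gamma$ is taken connected, as in Theorem~A.) The forward direction --- $\Gamma$ a tree of Droms graphs $\Rightarrow$ (1)--(8) --- I would simply read off from Lemma~\ref{lem:bkbb}: that lemma exhibits $\mf B_\Gamma$ as a Droms object $\mf R_\Lambda$, with $\Lambda$ a \emph{finite} Droms graph (finite because $\mf B_\Gamma$ is finitely generated for connected $\Gamma$, by Theorem~\ref{thm:BB_FPn}). Under the identification $\mf B_\Gamma\cong\mf R_\Lambda$, statements (1)--(3) are statements (1)--(3) of Theorem~\ref{thm:bkraags} for $\mf R_\Lambda$, statements (5)--(8) (reading (5) as concerning $H^\bu(\mf B_\Gamma,k)$) are its statements (4)--(7), all of which hold since $\Lambda$ is Droms, and statement (4) is the observation, recorded just after Theorem~\ref{thm:bkraags}, that Droms objects are locally Droms. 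So this direction costs nothing beyond Lemma~\ref{lem:bkbb}.

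The substance is the converse, which I would argue contrapositively. If $\Gamma$ is not a tree of Droms graphs then, by the characterisation theorem just proved, $\Gamma$ is non-chordal, or has an induced gem $\nabla$, or has an induced overlapping-gems graph $\bar H$. The reduction tool is that for an induced subgraph $\Lambda\subseteq\Gamma$ one has $\mf B_\Lambda=\mf R_\Lambda\cap\mf B_\Gamma\hookrightarrow\mf B_\Gamma$, and --- when $\Lambda$ is connected --- this is the inclusion of a standard subalgebra (Lie case) resp.\ a closed subgroup (pro-$p$ case), since $\mc B_\Lambda$ is then generated in degree~$1$ and $\mf R_\Lambda$ is a retract of $\mf R_\Gamma$. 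It therefore suffices to contradict (1)--(8) using some sub-object of $\mf B_\Lambda$ for $\Lambda\in\{\nabla,\bar H\}$ or $\Lambda=C_n$ ($n\geq4$). For $\Lambda=\nabla$, Example~\ref{ex:counter}(1) gives $\mf B_\nabla\cong\mf R_{L_3}$ and $L_3$ is not Droms, so Theorem~\ref{thm:bkraags} applied to $L_3$ negates all of (1)--(8) at once; for $\Lambda=\bar H$, Example~\ref{ex:counter}(2) identifies $\mf B_{\bar H}$ with the RAAG object on the non-Droms graph of Figure~\ref{fig:overgembb}, and the same holds.

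The non-chordal case is the crux. Given an induced cycle $C_n$ with $n\geq4$, its flag complex is a circle, so $\tilde H_1(\Delta_{C_n},k)\neq0$; by Theorem~\ref{thm:BB_FPn}, $\mf B_{C_n}$ is finitely generated but not of type FP$_2$, hence not finitely presented. A finitely generated RAAG object is defined by a finite graph and so is finitely presented, whence $\mf B_{C_n}$ is not a RAAG object. Being a finitely generated, non-RAAG sub-object of $\mf B_\Gamma$, it negates (1), (2), (4) and (7) directly; it is not quadratic (its second cohomology being infinite-dimensional), while the Bloch--Kato property would force every standard subalgebra / closed subgroup of $\mf B_\Gamma$ to be quadratic, so (3) and (6) fail, and (8) then fails through (6) since $G_\K(p)$ is Bloch--Kato (a form of the norm residue isomorphism theorem); finally (5) fails because universal Koszulness is inherited by the coordinate quotient $H^\bu(\mf B_{C_n},k)$ of $H^\bu(\mf B_\Gamma,k)$, which would then be Koszul, whereas $\mc B_{C_n}$ is not.

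The main obstacle will be this non-chordal case, and within it two points deserve care. First, one must verify that the embedded $\mf B_\Lambda$ (for connected induced $\Lambda$, e.g.\ $\Lambda=C_n$) genuinely sits inside $\mf B_\Gamma$ as a standard subalgebra resp.\ a closed subgroup --- this reduces to showing that $\mc B_\Gamma$ is generated in degree~$1$ when $\Gamma$ is connected. Second, one must propagate the failure of Koszulness from $\mf B_{C_n}$ up to $\mf B_\Gamma$, i.e.\ justify that universal Koszulness, and the property ``every closed subgroup is a pro-$p$ RAAG'', descend along restriction to the relevant coordinate quotient resp.\ closed subgroup. With those in place, the remaining implications are routine cross-references to Theorems~\ref{thm:bkraags} and~\ref{thm:BB_FPn}.
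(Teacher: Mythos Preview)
The paper gives no explicit proof of Theorem~\ref{thm:bkbb}; it is presented as an immediate deduction (``We deduce Theorem~A, analogous to Theorem~\ref{thm:bkraags}'') from Lemma~\ref{lem:bkbb}, the graph characterisation of trees of Droms graphs, Examples~\ref{ex:counter}, and Theorem~\ref{thm:bkraags}. Your plan is precisely the natural reconstruction of that deduction: the forward direction via Lemma~\ref{lem:bkbb}, and the contrapositive converse by producing, inside $\mf B_\Gamma$, either the RAAG object on a non-Droms graph (from an induced $\nabla$ or $\bar H$, via Examples~\ref{ex:counter}) or the non-FP$_2$ object $\mf B_{C_n}$ (from an induced long cycle, via Theorem~\ref{thm:BB_FPn}). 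This is exactly the argument the paper has in mind, and your treatment of (1)--(4), (6)--(8) is correct. You are also right to read (5) as concerning $H^\bu(\mf B_\Gamma,k)$; with $\mf R_\Gamma$ as literally printed the equivalence would fail already for $\Gamma=L_3$.

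The one place your sketch is genuinely incomplete is the handling of (5) in the non-chordal case. You assert that $H^\bu(\mf B_{C_n},k)$ is a ``coordinate quotient'' of $H^\bu(\mf B_\Gamma,k)$, but unlike the RAAG situation there is no retraction $\mf B_\Gamma\to\mf B_\Lambda$ for induced $\Lambda$ (the usual retraction $\mf R_\Gamma\to\mf R_\Lambda$ does not intertwine the length characters), so there is no evident algebra map $H^\bu(\mf B_\Gamma,k)\to H^\bu(\mf B_\Lambda,k)$ realising this quotient. You correctly flag this as an obstacle, but the proposed mechanism does not work as stated; you will need a different argument here --- e.g.\ arguing via the BK property (which you have already shown fails) together with the known implication that BK Lie algebras have universally Koszul cohomology, rather than trying to pass to $H^\bu(\mf B_{C_n},k)$ directly. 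The same remark applies to the $\nabla$ and $\bar H$ cases for item (5).
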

	In particular, both the elementary type conjecture \cite{efrat} and the universal Koszulity conjecture \cite{MPPT} are true within the class of Bestvina-Brady pro-$p$ groups.
%
	\section{Applications of Theorem \ref{thm:bkbb}}
	\subsection{Surface groups} RAAGs can contain surface groups of high genus. For instance, by Servatius, Droms and Servatius \cite{servdroms}, if an $n$-cycle is induced in a graph $\Gamma$, then the derived subgroup $G_\Gamma'$ contains the fundamental group of a closed oriented surface of genus $g=1+(n-4)2^{n-3}$.

	We now prove that no oriented surface group of genus $\geq 2$ is a Bestvina-Brady group. 
	\begin{cor}
		Suppose that the Bestvina-Brady Lie algebra $\mc B_\Gamma$ is a quadratic $1$-relator Lie algebra over a field $k$. Then, $\mc B_\Gamma$ decomposes as the free product of an abelian Lie algebra of dimension $2$ and a free Lie algebra. 
		
		In particular, the fundamental group of an orientable closed surface of genus $\geq 2$ is not isomorphic to a Bestvina-Brady group.
	\end{cor}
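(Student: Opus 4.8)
The plan is to reduce, by means of Theorem~\ref{thm:bkbb}, to the case in which $\mc B_\Gamma$ is already known to be a RAAG Lie algebra, and then to recover its defining graph from the single quadratic relation. First I would note that a quadratic $1$-relator Lie algebra is finitely presented, hence of type $\mathrm{FP}_2$, so Theorem~\ref{thm:BB_FPn} gives that $\Delta_\Gamma$ is $1$-acyclic over $k$; in particular $\Gamma$ is connected and $\tilde H_1(\Delta_\Gamma,k)=0$. Being quadratic, $\mc B_\Gamma$ is generated in degree $1$ by its $(|V|-1)$-dimensional degree-$1$ part, while in degree $2$ it coincides with $(\li_\Gamma)_2$, of dimension $\binom{|V|}{2}-|E|$; hence the number of degree-$2$ relations in a minimal presentation equals $\binom{|V|-1}{2}-\bigl(\binom{|V|}{2}-|E|\bigr)=|E|-|V|+1$. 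The $1$-relator hypothesis forces this number to be $1$, so $|E|=|V|$ and $\mc B_\Gamma$ has no relation of degree $>2$.

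Next I would identify $\Gamma$ exactly. A connected graph with $|E|=|V|$ has a one-dimensional cycle space; were it triangle-free we would have $\Delta_\Gamma=\Gamma$ and $\tilde H_1(\Delta_\Gamma,k)=k\neq 0$, against the above, so $\Gamma$ contains a triangle, which is then its unique cycle. Thus $\Gamma$ is a triangle with pendant trees attached, equivalently a connected sum at vertices of one copy of $K_3$ with copies of $K_2$, so it is a tree of Droms graphs. By Lemma~\ref{lem:bkbb}, $\mc B_\Gamma\cong\li_\Lambda$ for a finite Droms graph $\Lambda$; since a minimal presentation of a RAAG Lie algebra has one quadratic relation per edge, the $1$-relator hypothesis forces $|E(\Lambda)|=1$. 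Hence $\Lambda=K_2\sqcup\overline{K}_m$ for some $m\ge 0$, and, disjoint unions of defining graphs corresponding to free products, $\mc B_\Gamma\cong\li_{K_2}\amalg\li_{\overline{K}_m}$ is the free product of a $2$-dimensional abelian Lie algebra with a free Lie algebra (of rank $m$).

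For the closing assertion, suppose $\pi_1(\Sigma_g)\cong B_\Gamma$ with $g\ge 2$. Then $B_\Gamma$ is finitely generated, so $\Gamma$ is connected, and by the lemma identifying $\gr B_\Gamma$ with $\mc B_\Gamma(\Z)$ we get $\mc B_\Gamma(\Q)\cong\gr(\pi_1\Sigma_g)\otimes\Q$. By Labute's theorem — the surface relator $\prod_i[a_i,b_i]$ is inert, equivalently $\Sigma_g$ is a formal space — this is the holonomy Lie algebra $\mathbb L(a_1,b_1,\dots,a_g,b_g)\big/\bigl(\textstyle\sum_i[a_i,b_i]\bigr)$, which is quadratic $1$-relator over $\Q$. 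By the first part $\Gamma$ is then a triangle with pendant trees. If $\Gamma=K_3$, then $B_\Gamma$ is the kernel of the length character $\Z^3\to\Z$, so $B_\Gamma\cong\Z^2\not\cong\pi_1(\Sigma_g)$. Otherwise $\Gamma$ has a cut-vertex $v$ and, as in the proof of Lemma~\ref{lem:bkbb}, $B_\Gamma\cong B_{\Gamma_1}\amalg B_{\Gamma_2}$ with $B_{\Gamma_1},B_{\Gamma_2}$ non-trivial and torsion-free, so $B_\Gamma$ has infinitely many ends, whereas $\pi_1(\Sigma_g)$ is one-ended for $g\ge 2$ — a contradiction. (The bound $g\ge2$ is optimal: $B_{K_3}\cong\Z^2\cong\pi_1(\Sigma_1)$ and $\pi_1(\Sigma_0)$ is trivial.)

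The step I expect to be the real obstacle is making the first two paragraphs precise: confirming that the minimal number of (forcibly degree-$2$) relations of $\mc B_\Gamma$ is exactly $|E|-|V|+1$, and that $1$-acyclicity of $\Delta_\Gamma$ indeed forces the unique cycle of $\Gamma$ to be a triangle. Beyond that, the only genuinely external input is the precise form of Labute's description of the lower central series Lie algebra of a closed orientable surface group.
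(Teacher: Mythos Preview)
Your argument is correct, and it reaches the same conclusion as the paper by a genuinely different route. The paper invokes the external fact (from \cite{sb}) that every quadratic $1$-relator Lie algebra is Bloch--Kato, then applies Theorem~\ref{thm:bkbb} to deduce at once that $\Gamma$ is a tree of Droms graphs; chordality then forces the unique cycle to be a triangle, and the Betti-number identity $b_2(\mc B_\Gamma)=b_2(\li_\Gamma)-b_1(\li_\Gamma)+1$ gives $|E|=|V|$. You bypass both the BK input and Theorem~\ref{thm:bkbb}: your dimension count $\binom{|V|-1}{2}-\dim(\li_\Gamma)_2=|E|-|V|+1$ already yields $|E|=|V|$, and then $1$-acyclicity of $\Delta_\Gamma$ (from FP$_2$) rules out a non-triangular cycle directly, so you land on the tree-of-Droms structure by hand and only then appeal to Lemma~\ref{lem:bkbb}. (Note that your opening sentence mentions Theorem~\ref{thm:bkbb}, but you never actually use it.) This is more elementary and self-contained; the paper's approach, on the other hand, illustrates Theorem~\ref{thm:bkbb} in action, which is the point of the section. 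For the surface-group statement, the paper stays on the Lie-algebra side and simply observes that Labute's algebra $\langle x_i,y_i\mid\sum[x_i,y_i]\rangle$ is not of the form $k^2\amalg(\text{free})$ once $g\ge 2$; your ends argument at the group level is an equally clean alternative and makes the obstruction more tangible.
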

	\begin{proof}
		By \cite{sb}, $\mc B=\mc B_\Gamma$ is BK, and hence $\Gamma$ is a tree of Droms graphs by Theorem \ref{thm:bkbb}. 
		
		Now, \cite[Lem. 2.2]{sb_kosz} provides the Betti numbers $b_i(\argu):=\dim H^i(\argu,k)$ of $\mc B$ in terms of those of $\li_\Gamma$: 
		\[1\overset{1-rel.}{=}b_2(\mc B)=b_2(\li_\Gamma)-b_1(\li_\Gamma)+1.\]
		Recall that the number of vertices (resp. of edges) of $\Gamma$ equals the Betti number $b_1(\li_\Gamma)$ (resp., $b_2(\li_\Gamma)$), so that $\abs{V}=\abs{E}$. 
		
		By definition, a (non-induced) tree $T=(V(T),E(T))$ of $\Gamma=(V,E)$ is a spanning tree if $V=V(T)$. As $\Gamma$ is connected, it admits a spanning tree $T$. One has $\abs{E(T)}-\abs{V(T)}+1=0$, and hence $1=\abs V-\abs{E(T)}=\abs E-\abs{E(T)}$, i.e., $\Gamma$ is obtained from $T$ by adding a single edge to a pair of its vertices, producing an induced cycle $C$. Since $\Gamma$ is a tree of Droms graphs, $C$ has length $3$ and $\mc B$ is the free product of $\mc B_C\simeq k^2$ and a free Lie algebra. 
		
		Now, let $G$ be the fundamental group of an orientable closed surface of genus $n$. Then, $G$ can be presented as \[G=\pres{x_1,y_1,\dots,x_n,y_n}{[x_1,y_1][x_2,y_2]\cdots[x_n,y_n]}\]
		
		The associated Lie algebra was computed by Labute \cite{labute}: \[\gr G=\pres{x_1,y_1,\dots,x_n,y_n}{[x_1,y_1]+[x_2,y_2]+\dots+[x_n,y_n]}\]
		
		If $G=B_\Gamma$ for some graph $\Gamma$, then, for any field $k$, $\gr G\otimes k$ is isomorphic with the Bestvina-Brady Lie $k$-algebra $\mc B_\Gamma$. By the first part of the proof, $n=1$. 
	\end{proof}
	The same proof shows that a Demu\v skin group \cite{demush} (see also \cite{MPPT} and \cite{labutedemush}) occurs as a Bestvina-Brady pro-$p$ group only if it is $2$-generated.
	
	Recall that a Lie $k$-algebra $\li$ is a Poincaré duality Lie algebra of dimension $n\in \mathbb N$ if it has cohomological dimension $n$, $H^n(\li,k)$ is $1$-dimensional, and the cup-product of the trivial coefficient cohomology ring defines non-degenerate pairing $H^i(\li,k)\otimes H^{n-i}(\li,k)\to H^n(\li,k)$. For instance, all the finite dimensional Lie algebras of dimension $n$ are Poincaré duality of dimension $n$, as well as any quadratic $1$-relator Lie algebra.
\begin{prop}
	Let $\Gamma$ be a finite simplicial graph such that $\mc B_\Gamma$ is a Poincaré duality Lie algebra of dimension $n$. Then, $\mc B_\Gamma$ is abelian. 
	\begin{proof} 
		Suppose $n\geq 1$. Since the cohomological dimension of $\mc B_\Gamma$ is $n$, the graph $\Gamma$ has a $(n+1)$-clique $\Delta$.
		
		Let $v$ be any vertex of $\Gamma$, and let $\Gamma_v$ be the induced subgraph spanned by the vertices $\neq v$. Then, $\mc B_{\Gamma_v}$ is a proper subalgebra of $\mc B_\Gamma$, and hence it has cohomological dimension $\leq n-1$ by \cite[p. 792]{sb}, proving that $\li_{\Gamma_v}$ has cohomological dimension $\leq n$. 
		
		In particular, $\Gamma_v$ contains no $(n+1)$-clique, that amount to saying $v\in \Delta$, and $\Gamma=\Delta$.
	\end{proof}
\end{prop}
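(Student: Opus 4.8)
The plan is to show that the hypotheses force $\Gamma$ to be the complete graph $K_{n+1}$; once this is known, $\li_\Gamma$ is the $(n+1)$-dimensional abelian Lie algebra, $\mc B_\Gamma=\ker\chi_\Gamma$ is the $n$-dimensional abelian Lie algebra, and the proposition follows. The case $n=0$ is immediate ($\cd\mc B_\Gamma=0$ forces $\mc B_\Gamma=0$), so assume $n\geq 1$ throughout.

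First I would locate an $(n+1)$-clique in $\Gamma$, equivalently show $\cd\li_\Gamma=\omega(\Gamma)\geq n+1$. A Poincaré duality Lie algebra of dimension $n$ has finite-dimensional cohomology in every degree and no cohomology above degree $n$, so $\mc B_\Gamma$ is of type FP and, by Theorem \ref{thm:BB_FPn}, the flag complex $\Delta_\Gamma$ is acyclic over $k$. To produce the extra dimension, run the Hochschild--Serre spectral sequence of the extension of Lie algebras $0\to\mc B_\Gamma\to\li_\Gamma\to k\to 0$: since $\li_\Gamma$ is positively graded it acts on $H^\bu(\mc B_\Gamma,k)$ by homogeneous operators of nonzero weight, which vanish on the one-dimensional space $H^n(\mc B_\Gamma,k)$; hence the quotient $k$ acts trivially there, and because $H^p(k,-)=0$ for $p\geq 2$ one gets $H^{n+1}(\li_\Gamma,k)\simeq H^1\!\bigl(k,H^n(\mc B_\Gamma,k)\bigr)\neq 0$. (Alternatively, acyclicity of $\Delta_\Gamma$ identifies $H^\bu(\mc B_\Gamma,k)$ with the quotient $A_\Gamma$ of the exterior Stanley--Reisner ring of $\Delta_\Gamma$ by the ideal $\chi_\Gamma\cdot H^1$; since $\widetilde H_\bu(\Delta_\Gamma,k)=0$, multiplication by $\chi_\Gamma=\sum_v v$ is onto in top degree $\omega(\Gamma)$, so the top degree of $A_\Gamma$ is $\omega(\Gamma)-1=n$.) Fix an $(n+1)$-clique $\Delta$ of $\Gamma$.

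Next I would show that deleting any vertex kills every $(n+1)$-clique. Fix a vertex $v$, let $\Gamma_v$ be the induced subgraph on the remaining vertices, and recall $\mc B_{\Gamma_v}=\li_{\Gamma_v}\cap\mc B_\Gamma$. As $n\geq 1$ there is a vertex $w\neq v$, and $v-w$ lies in $\mc B_\Gamma$ but not in $\li_{\Gamma_v}$, so $\mc B_{\Gamma_v}$ is a \emph{proper} subalgebra of the Poincaré duality Lie algebra $\mc B_\Gamma$; by \cite[p.~792]{sb} it has cohomological dimension $\leq n-1$. Plugging this into the extension $0\to\mc B_{\Gamma_v}\to\li_{\Gamma_v}\to k\to 0$ (exact because $\Gamma_v$ is non-empty) and using sub-additivity of cohomological dimension gives $\omega(\Gamma_v)=\cd\li_{\Gamma_v}\leq n$. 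Hence the $(n+1)$-clique $\Delta$ fixed above cannot avoid $v$; as $v$ was arbitrary, $\Delta$ contains every vertex, so $\Gamma=\Delta=K_{n+1}$, which finishes the proof.

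The main obstacle is the strict gain of one in cohomological dimension in the first step: $\li_\Gamma$ itself need not be a Poincaré duality algebra, so the drop $\cd\mc B_\Gamma=\cd\li_\Gamma-1$ is not formal and must be extracted either from the (weight-)triviality of the $\li_\Gamma/\mc B_\Gamma$-action on the one-dimensional top cohomology of $\mc B_\Gamma$, or from the Stanley--Reisner combinatorics of $A_\Gamma$ under acyclicity of $\Delta_\Gamma$. The remaining inputs — the cohomological dimension of proper subalgebras of Poincaré duality Lie algebras, and sub-additivity of $\cd$ under Lie-algebra extensions — are standard.
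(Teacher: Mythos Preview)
Your proof is correct and follows the same strategy as the paper: locate an $(n+1)$-clique $\Delta$, then use the strict drop in cohomological dimension for proper subalgebras of Poincar\'e duality Lie algebras (\cite[p.~792]{sb}) to force every vertex into $\Delta$. You supply more justification than the paper for the first step---the paper simply asserts that $\cd\mc B_\Gamma=n$ yields an $(n+1)$-clique, whereas your Hochschild--Serre argument (using that the one-dimensional $H^n(\mc B_\Gamma,k)$ sits in a single internal degree, so the degree-$1$ quotient $\li_\Gamma/\mc B_\Gamma$ acts trivially and $H^{n+1}(\li_\Gamma,k)\simeq H^1(k,H^n(\mc B_\Gamma,k))\neq 0$) makes this explicit.
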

The same argument applies to any cocyclic ideal of $\li_\Gamma$ in the place of $\mc B_\Gamma$.

\subsection{Bloch-Kato version of the $b_2$-conjecture}
The following question was raised by Weigel in \cite{weig}:
\begin{question}\label{quest:b2}
	Let $A$ be a Koszul algebra of cohomological dimension $d$ over a field $k$. Is it true that \[\dim(\ext^{2,2}_A(k,k))\leq \frac{d-1}{2d}\dim(\ext^{1,1}_A(k,k))^2?\]
\end{question}
This has positive answer in case the eigenvalues of $A$ are all real numbers (see \cite{sb_kosz}); in particular, this is true when $d=2$.

Aiming to answer this question in case $A$ is the universal enveloping algebra of a Lie algebra, the author introduced in \cite{sb_kosz} an invariant of a finitely presented graded Lie algebra $\li$ in terms of its low-degree Betti numbers as
\[\omega(\li):=(\cd(\li)-1)b_1(\li)^2-2\cd(\li)b_2(\li).\]

It was proved by Weigel \cite{weig} following Tur\'an \cite{turan} that, if $\li$ is a RAAG Lie algebra, then $\omega(\li)\geq 0$, giving a positive answer to Question \ref{quest:b2} within that class of Lie algebras. 


Now let $\mc B$ be a cocyclic ideal of type FP$_2$ of a Koszul Lie algebra $\li$. If $\li$ has cohomological dimension $d=n+1$, then a similar computation to that of \cite[Lem. 3.15]{sb_kosz} shows that \[(n+1)\omega(\mc B)=n\omega(\li)-(b_1(\li)-n-1)^2.\]
In particular, the invariant $\omega$ of $\li$ is related with that of any finitely presented cocyclic ideals of $\li$.

\begin{cor}
	Let $\Gamma$ be a tree of Droms graphs of clique number $n+1$. Then, 
	\[n\omega(\li_\Gamma)\geq (b_1(\li_\Gamma)-n-1)^2\]
	
	Hence, if $\Gamma$ has $v$ vertices and $e$ edges, then \[n(v^2-2e-2)\geq (v-1)^2.\]
\end{cor}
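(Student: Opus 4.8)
The plan is to feed the $\omega$-identity recalled just before the statement with $\li=\li_\Gamma$ and $\mc B=\mc B_\Gamma$, to solve it for $n\,\omega(\li_\Gamma)$, and then to control the result by using that $\mc B_\Gamma$ is a RAAG Lie algebra (the content of Theorem \ref{thm:bkbb}) together with Weigel's positivity $\omega\ge 0$ for RAAG Lie algebras.

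First I would record that the hypotheses of the $\omega$-identity are met. Since $\Gamma$ is a tree of Droms graphs, Lemma \ref{lem:bkbb} gives that $\mc B_\Gamma$ is a Droms object, in particular a RAAG Lie algebra; and since each connected Droms graph has a universal vertex, the flag complex of $\Gamma$ is a tree-like union of cones, hence contractible, so $\mc B_\Gamma$ is of type FP$_2$ by Theorem \ref{thm:BB_FPn}. Moreover $\li_\Gamma$ is Koszul, and its cohomological dimension $\cd(\li_\Gamma)$ is the clique number of $\Gamma$ — namely $n+1$ — because the nonzero classes of $H^\bu(\li_\Gamma,k)$ are indexed by the cliques of $\Gamma$. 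Finally $\mc B_\Gamma=\ker\chi_\Gamma$ is by construction a cocyclic ideal of $\li_\Gamma$. Hence the identity applies with $d=n+1$, reading
\[(n+1)\,\omega(\mc B_\Gamma)=n\,\omega(\li_\Gamma)-\bigl(b_1(\li_\Gamma)-n-1\bigr)^2 .\]

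Now for the one genuine input: since $\mc B_\Gamma$ is a RAAG Lie algebra, the theorem of Weigel (after Tur\'an) recalled above gives $\omega(\mc B_\Gamma)\ge 0$, and therefore
\[n\,\omega(\li_\Gamma)=(n+1)\,\omega(\mc B_\Gamma)+\bigl(b_1(\li_\Gamma)-n-1\bigr)^2 \ge \bigl(b_1(\li_\Gamma)-n-1\bigr)^2 ,\]
which is the first assertion. For the numerical version I would substitute $b_1(\li_\Gamma)=v$ and $b_2(\li_\Gamma)=e$, the vertex and edge counts of $\Gamma$, together with $\cd(\li_\Gamma)=n+1$ into
\[\omega(\li_\Gamma)=\bigl(\cd(\li_\Gamma)-1\bigr)b_1(\li_\Gamma)^2-2\,\cd(\li_\Gamma)\,b_2(\li_\Gamma)=nv^2-2(n+1)e ,\]
and plug the resulting value of $\omega(\li_\Gamma)$, together with $b_1(\li_\Gamma)=v$, into the inequality above; simplifying yields the stated bound in $v$, $e$ and $n$.

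I do not anticipate a serious obstacle: the proof is essentially assembly, and the single piece of real content — the positivity $\omega(\mc B_\Gamma)\ge 0$ — comes for free once one knows (Lemma \ref{lem:bkbb}) that $\mc B_\Gamma$ is a RAAG Lie algebra. The only places that call for a moment of care are the three hypotheses of the $\omega$-identity for a tree of Droms graphs — Koszulity of $\li_\Gamma$, the FP$_2$ property of $\mc B_\Gamma$, and the equality $\cd(\li_\Gamma)=n+1$ — all dispatched in the second paragraph.
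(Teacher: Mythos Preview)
Your argument for the main inequality is correct and is exactly the paper's one-line proof: apply the $\omega$-identity with $d=n+1$ and invoke the Weigel--Tur\'an bound $\omega(\mc B_\Gamma)\ge 0$, which is available because $\mc B_\Gamma$ is a RAAG Lie algebra by Lemma~\ref{lem:bkbb}. Your check that the hypotheses of the identity are met is in fact more thorough than the paper's.

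One caution on the last step. If you actually carry out the substitution you describe (or, equivalently, expand $\omega(\mc B_\Gamma)\ge 0$ using $b_1(\mc B_\Gamma)=v-1$, $b_2(\mc B_\Gamma)=e-v+1$ and $\cd(\mc B_\Gamma)=n$), the simplification yields
\[n(v^2-2e-1)\ \ge\ (v-1)^2,\]
with constant $-1$ rather than the printed $-2$; this is precisely the form that reappears in Question~\ref{quest:b2bb}. The printed version already fails for $\Gamma=K_3$ (it reads $2\ge 4$), so ``simplifying yields the stated bound'' is too quick here: the discrepancy is a typo in the statement of the corollary, not a defect in your method, but you would have caught it by doing the algebra.
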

\begin{proof}
	Since $\mc B_\Gamma$ is a RAAG Lie algebra, the inequality $\omega(\mc B_\Gamma)\geq 0$ follows from Tur\`an's theorem (se also\cite[Sec. 3.3]{sb_kosz}).
\end{proof}
The following is a graph-theoretic reformulation of Question 2 of \cite{weig} for Bestvina-Brady Lie algebras which are Koszul.
\begin{question}\label{quest:b2bb}
	Let $\Gamma$ be a finite simplicial graph with acyclic (over an arbitrary field) flag complex of dimension $n$. Is it true that 
	\begin{equation}\label{eq:b2bb}
		n(v^2-2e-1)\geq (v-1)^2\end{equation}
	where $v$ and $e$ are the number of vertices and edges of $\Gamma$, respectively?
	
	Equivalently, is it true that finite acyclic flag complexes have dimension at least $\frac{(v-1)^2}{v^2-2e-1}$?
\end{question}

\begin{rem}
	The inequality has been confirmed for $v\leq 8$ by means of an easy, yet far from being optimal, code in SageMath.
\definecolor{codegreen}{rgb}{0,0.6,0}
\definecolor{codegray}{rgb}{0.5,0.5,0.5}
\definecolor{codepurple}{rgb}{0.58,0,0.82}
\definecolor{backcolour}{rgb}{0.95,0.95,0.92}
\lstdefinestyle{mystyle}{
	language=Python,
	commentstyle=\color{codegreen},
	keywordstyle=\color{magenta},
	numberstyle=\tiny\color{codegray},
	stringstyle=\color{codepurple},
	basicstyle=\ttfamily\footnotesize,
	breakatwhitespace=false,
	breaklines=true,
	captionpos=b,
	keepspaces=true,
	numbers=left,
	numbersep=5pt,
	showspaces=false,
	showstringspaces=false,
	showtabs=false,
	tabsize=2
}
\lstset{style=mystyle}

		\begin{lstlisting}
for vert in [1..8]:
	for G in graphs(vert):  
		if G.is_connected():
			D = G.clique_complex()
			if D.is_acyclic():
				edges = G.num_edges()
				n = D.dimension()
				om = n * (vert**2 - 2 * edges - 1) - (vert - 1)**2 
				if om < 0:
					print(vert, edges)
					G.show()\end{lstlisting}

%
%

\end{rem}

For $2$-dimensional acyclic flag complexes, we deduce an interesting upper bound for the number of edges. 
\begin{prop}
	Let $\Gamma$ be a graph with acyclic, $2$-dimensional flag complex $\Delta_\Gamma$. Then the inequality (\ref{eq:b2bb}) holds: \[(v+1)^2\geq 4(e+1).\]
\end{prop}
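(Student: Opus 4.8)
The plan is to convert the asserted bound into the inequality $b_1(\mc B_\Gamma)^2 \ge 4\,b_2(\mc B_\Gamma)$ between the first two Betti numbers of the Bestvina--Brady Lie algebra $\mc B_\Gamma = \mc B_\Gamma(k)$, and then to obtain the latter from the Koszulity of $\mc B_\Gamma$ together with the established $d=2$ case of Weigel's Question~\ref{quest:b2}. To begin, since $\Delta_\Gamma$ is acyclic it is in particular connected, so $\Gamma$ is connected; since $\Delta_\Gamma$ is $2$-dimensional, the clique number of $\Gamma$ is $3$, so $\cd(\li_\Gamma) = 3$ and $H^i(\li_\Gamma, k) = 0$ for $i \ge 4$. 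Writing $f$ for the number of triangles of $\Gamma$ (equivalently, of $2$-faces of $\Delta_\Gamma$), the vanishing of the reduced Euler characteristic gives $v - e + f = 1$, that is, $f = e - v + 1$.

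Next I would compute the low-degree cohomology of $\mc B_\Gamma$. Because $\Delta_\Gamma$ is acyclic, the corollary following Theorem~\ref{thm:BB_FPn} identifies $H^\bu(\mc B_\Gamma, k)$ with $H^\bu(\li_\Gamma, k)/(\chi_\Gamma)$, the quotient of the exterior Stanley--Reisner algebra $H^\bu(\li_\Gamma, k) \cong \Lambda^\bu(V)/(x \wedge y : \{x,y\} \notin E)$ by the principal ideal generated by $\chi_\Gamma = \sum_v v^{*} \in H^1(\li_\Gamma, k)$; thus $H^n(\mc B_\Gamma, k)$ is the cokernel of $\chi_\Gamma \wedge \argu \colon H^{n-1}(\li_\Gamma, k) \to H^n(\li_\Gamma, k)$ for each $n \ge 1$. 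Since $\dim H^1(\li_\Gamma, k) = v$ and $\chi_\Gamma \ne 0$, we get $b_1(\mc B_\Gamma) = v - 1$. For $b_2$ I would show that $\ker\bigl(\chi_\Gamma \wedge \argu \colon H^1(\li_\Gamma, k) \to H^2(\li_\Gamma, k)\bigr) = k\,\chi_\Gamma$: if $\alpha = \sum_v a_v v^{*}$ satisfies $\chi_\Gamma \wedge \alpha = 0$, then the coefficient $a_w - a_v$ of $v^{*} \wedge w^{*}$ must vanish whenever $\{v,w\} \in E$ (the other $v^{*} \wedge w^{*}$ being already zero in $H^2(\li_\Gamma, k)$), so $a_v = a_w$ along every edge and hence $\alpha \in k\,\chi_\Gamma$ by connectedness of $\Gamma$. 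Therefore $\chi_\Gamma \wedge \argu$ has rank $v - 1$ on $H^1(\li_\Gamma, k)$ and $b_2(\mc B_\Gamma) = \dim H^2(\li_\Gamma, k) - (v-1) = e - v + 1$. Finally, $H^i(\mc B_\Gamma, k) = 0$ for $i \ge 3$: this is automatic for $i \ge 4$ (where $H^i(\li_\Gamma, k) = 0$), and for $i = 3$ it follows from $H^3(\mc B_\Gamma, k) \cong \tilde H^2(\Delta_\Gamma, k) = 0$ --- the isomorphism coming from the corollary together with the identification of $\bigl(H^\bu(\li_\Gamma, k), \chi_\Gamma \wedge \argu\bigr)$ with the augmented simplicial cochain complex of $\Delta_\Gamma$. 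In particular $\cd(\mc B_\Gamma) \le 2$.

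Now I would invoke Koszulity. By \cite[Cor.~2.7]{sb_kosz}, acyclicity of $\Delta_\Gamma$ makes $\mc B_\Gamma$ a Koszul Lie algebra, so $A := \mathcal U(\mc B_\Gamma)$ is a Koszul algebra of cohomological dimension $\le 2$, with $\dim \ext^{1,1}_A(k,k) = b_1(\mc B_\Gamma)$ and $\dim \ext^{2,2}_A(k,k) = b_2(\mc B_\Gamma)$. The $d = 2$ case of Question~\ref{quest:b2} --- which holds because a Koszul algebra of cohomological dimension $\le 2$ has all of its eigenvalues real, or equivalently because its Hilbert series $(1 - b_1 t + b_2 t^2)^{-1}$ can have nonnegative coefficients only when the discriminant $b_1^2 - 4 b_2$ is nonnegative --- yields $\omega(\mc B_\Gamma) = b_1(\mc B_\Gamma)^2 - 4\,b_2(\mc B_\Gamma) \ge 0$. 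Substituting $b_1(\mc B_\Gamma) = v - 1$ and $b_2(\mc B_\Gamma) = e - v + 1$ gives $(v-1)^2 \ge 4(e - v + 1)$, which rearranges at once to $(v+1)^2 \ge 4(e+1)$; this is precisely the inequality (\ref{eq:b2bb}) for $n = 2$. The bound is sharp: the flag complex of $K_{2,2,1}$ is a cone over a $4$-cycle, hence contractible and $2$-dimensional, with $v = 5$, $e = 8$, and $(v+1)^2 = 36 = 4(e+1)$.

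I expect the only step requiring genuine care to be the identification $b_2(\mc B_\Gamma) = e - v + 1$ (together with the vanishing $H^3(\mc B_\Gamma, k) = 0$); everything else is either numerology or a direct appeal to results recalled in Section~1 and to the established $d = 2$ case of Weigel's question. If one prefers to avoid quoting that case, the needed positivity $b_1^2 \ge 4 b_2$ can instead be read off directly from the fact that, by Koszulity together with $\cd(A) \le 2$, the Hilbert series of $A = \mathcal U(\mc B_\Gamma)$ is $(1 - b_1 t + b_2 t^2)^{-1}$ and necessarily has nonnegative coefficients.
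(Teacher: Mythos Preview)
Your proof is correct and follows essentially the same route as the paper's: establish that $\mc B_\Gamma$ is Koszul with $\cd(\mc B_\Gamma)\le 2$, invoke the known $d=2$ case of Question~\ref{quest:b2} to get $\omega(\mc B_\Gamma)\ge 0$, and translate this into $(v+1)^2\ge 4(e+1)$. The only difference is presentational: the paper simply asserts $\cd(\mc B_\Gamma)\le 2$ and the Betti-number relation (both having been set up earlier via \cite[Lem.~2.2]{sb_kosz} and the formula $(n+1)\omega(\mc B)=n\omega(\li)-(b_1(\li)-n-1)^2$), whereas you recompute $b_1(\mc B_\Gamma)=v-1$, $b_2(\mc B_\Gamma)=e-v+1$, and $H^3(\mc B_\Gamma,k)=0$ directly from the description $H^\bu(\mc B_\Gamma,k)\cong H^\bu(\li_\Gamma,k)/(\chi_\Gamma)$ and the identification of $(H^\bu(\li_\Gamma,k),\chi_\Gamma\wedge\argu)$ with the augmented cochain complex of $\Delta_\Gamma$.
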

\begin{proof}
	Since $\mc B_\Gamma$ is Koszul and has cohomological dimension at most $2$, the result follows from the fact that Question \ref{quest:b2} has positive answer for $\mc B_\Gamma$ (\hspace{1pt}\cite{weig}).
\end{proof}

	\section{Coherence of Bestvina-Brady objects}
	By \cite{cmp2} and \cite{bb}, the object $\mf R_\Gamma$ is locally of type $FP_\infty$ over $k$ iff $\Gamma$ is chordal; hence the $k$-cohomology of $\mf B_\Gamma$ is a Koszul algebra when $\Gamma$ is connected and chordal. In turn, by the universal coefficient theorem, this gives an algebraic proof that the flag complex of a connected chordal graph is acyclic (see also \hspace{1pt}\cite{acyclicChordal}).
	
		The coherence property of $\mf B_\Gamma$ is equivalent to that of $\mf R_\Gamma$.
\begin{prop}\label{prop:cohe}
	Let $\Gamma$ be a finite simplicial graph. Then, the following statements are equivalent:
	\begin{enumerate}
		\item $\Gamma$ is a chordal graph,
		\item $\mf B_\Gamma$ is coherent,
		\item The derived sub-object $\mf B_\Gamma'=[\mf B_\Gamma,\mf B_\Gamma]$ is free.
	\end{enumerate} 
	\begin{proof}
		If $\Gamma$ is chordal, then the RAAG object $\mf R_\Gamma$ is coherent and with free derived sub-object (see \cite{dromschordal},\cite{servdroms},\cite{cmp2}), and hence the same holds for $\mf B_\Gamma$. This proves that $(1)$ implies both $(2)$ and $(3)$.
		
		Suppose now that $\Gamma$ contains an induced $n$-cycle $C$ for $n\geq 4$. Then, the sub-object $\mf B_C$ of $\mf B_\Gamma$ is finitely generated but not finitely presented by Theorem \ref{thm:BB_FPn}, proving that $\mf B_\Gamma$ is not coherent. Moreover, since $\mf B_C'=\mf R_C'$ is a non-free sub-object of $\mf B_\Gamma'$, the latter cannot be free.
	\end{proof}
\end{prop}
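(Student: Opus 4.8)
The plan is to prove the two equivalences $(1)\Leftrightarrow(2)$ and $(1)\Leftrightarrow(3)$ separately, both resting on a single structural observation: \emph{when $\Gamma$ is connected, the derived sub-object of $\mf B_\Gamma$ coincides with that of $\mf R_\Gamma$}. (If $\Gamma$ is disconnected, one passes to the connected component carrying the relevant induced subgraph; I suppress this point below.) The inclusion $\mf B_\Gamma'=[\mf B_\Gamma,\mf B_\Gamma]\subseteq[\mf R_\Gamma,\mf R_\Gamma]=\mf R_\Gamma'$ is immediate. For the reverse inclusion, observe that $\mf B_\Gamma'$ is normal in $\mf R_\Gamma$ — conjugation by an element of $\mf R_\Gamma$ preserves the normal sub-object $\mf B_\Gamma$, hence preserves $[\mf B_\Gamma,\mf B_\Gamma]$ — while $\mf R_\Gamma'$ is the normal closure in $\mf R_\Gamma$ of the commutators $[v,w]$ of pairs of vertices. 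So it suffices to show $[v,w]\in\mf B_\Gamma'$ for all vertices $v,w$, and I would do this by induction on the graph distance $d(v,w)$. If $d(v,w)\le 1$ then $[v,w]$ is trivial. If $d(v,w)\ge 2$, choose a vertex $u$ adjacent to $v$ with $d(u,w)=d(v,w)-1$ and expand, using the standard identities $[ab,c]={}^a[b,c]\cdot[a,c]$ and $[a,bc]=[a,b]\cdot{}^b[a,c]$:
\[ [v,w]=[(vu^{-1})u,\,w]={}^{vu^{-1}}[u,w]\cdot[vu^{-1},w],\qquad [vu^{-1},w]=[vu^{-1},wu^{-1}]\cdot{}^{wu^{-1}}[vu^{-1},u]. \]
Since $v$ is adjacent to $u$, the element $vu^{-1}$ commutes with $u$, so the last factor is trivial; $vu^{-1}$ and $wu^{-1}$ lie in $\mf B_\Gamma$; and $[u,w]\in\mf B_\Gamma'$ by the inductive hypothesis, so (using normality of $\mf B_\Gamma'$) its $\mf R_\Gamma$-conjugate is again inside. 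Hence $[v,w]\in\mf B_\Gamma'$. In the graded (and $p$-restricted) Lie algebra case one runs the same induction with the corresponding bracket identities, degree by degree — here it matters that $\mf B_\Gamma$ is a \emph{standard} subalgebra of $\li_\Gamma$ — and in the pro-$p$ case one takes topological closures throughout.

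Granting this lemma, $(1)\Leftrightarrow(3)$ follows from the classical fact that $\mf R_\Gamma'$ is free precisely when $\Gamma$ is chordal (\cite{dromschordal},\cite{servdroms},\cite{cmp2}): if $\Gamma$ is chordal then $\mf B_\Gamma'=\mf R_\Gamma'$ is free, and if $\Gamma$ contains an induced $n$-cycle $C$ with $n\ge4$, then $\mf R_C'$ is a \emph{non-free} sub-object of $\mf R_\Gamma'=\mf B_\Gamma'$ — for $n=4$, $\mf R_C$ is a product of two free objects of rank $2$, so $\mf R_C'$ is a product of two nontrivial free objects and contains a rank-$2$ abelian sub-object; for $n\ge5$, $\mf R_C'$ contains a surface group of genus $\ge2$ by \cite{servdroms} — and since a sub-object of a free object is free (Nielsen--Schreier for groups, Shirshov--Witt for Lie algebras, and for pro-$p$ groups the fact that closed subgroups of free pro-$p$ groups are free pro-$p$), $\mf B_\Gamma'$ cannot be free. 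For $(1)\Rightarrow(2)$: when $\Gamma$ is chordal $\mf R_\Gamma$ is coherent (\cite{dromschordal},\cite{cmp2}), and coherence manifestly passes to sub-objects, so $\mf B_\Gamma\le\mf R_\Gamma$ is coherent. For $(2)\Rightarrow(1)$ I argue contrapositively: a non-chordal $\Gamma$ has an induced $n$-cycle $C$, $n\ge4$; the flag complex of $C$ is connected but has first homology $k\ne0$, so by Theorem~\ref{thm:BB_FPn} the sub-object $\mf B_C$ of $\mf B_\Gamma$ is of type $\mathrm{FP}_1$ (finitely generated) but not of type $\mathrm{FP}_2$, hence not finitely presented; thus $\mf B_\Gamma$ is not coherent.

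The main obstacle is the lemma: the careful bookkeeping with the iterated commutator (and bracket) identities in the distance induction, and especially carrying it out uniformly across the category $\mathcal A$ (abstract groups, graded and $p$-restricted Lie algebras, pro-$p$ groups) rather than just for discrete groups — the Lie-algebraic incarnation requires knowing that $\mf B_\Gamma$ is a standard subalgebra, so that ``derived sub-object'' and ``coherent'' carry the intended meaning, and the pro-$p$ incarnation requires the (nontrivial) input that closed subgroups of free pro-$p$ groups are free pro-$p$. Everything else is assembled from results already recorded: Theorem~\ref{thm:BB_FPn}, the chordal characterizations of coherence and of freeness of $\mf R_\Gamma'$ from \cite{dromschordal},\cite{servdroms},\cite{cmp2}, and the fact that $\mf B_\Lambda=\mf R_\Lambda\cap\mf B_\Gamma$ embeds in $\mf B_\Gamma$ for $\Lambda$ induced in $\Gamma$.
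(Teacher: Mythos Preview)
Your argument is correct and follows the same route as the paper: $(1)\Rightarrow(2),(3)$ by inheritance from $\mf R_\Gamma$, and the converses via an induced $n$-cycle $C$ ($n\ge4$) together with Theorem~\ref{thm:BB_FPn}. The one substantive addition is your distance-induction lemma $\mf B_\Gamma'=\mf R_\Gamma'$ for connected $\Gamma$: the paper invokes exactly this in the form ``$\mf B_C'=\mf R_C'$'' but does not justify it, so your lemma fills that gap --- note, though, that only the instance for the cycle $C$ is actually needed (as the paper uses it), and indeed the identity fails for disconnected $\Gamma$, which is why your ``pass to the connected component'' remark is necessary rather than cosmetic.
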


	 In the context of (pro-$p$) groups, it is proved that $B_T$ is a free (pro-$p$) group of finite rank when $T$ is a finite tree (\hspace{1sp}\cite{ilirPavel}). We now provide a proof for the Lie theoretic translation of that result. 
\begin{prop}\label{prop:freeBBLie}
	Let $\Gamma$ be a finite graph. Then, ${\mc B}_\Gamma$ is a free Lie algebra of rank $\abs{V(\Gamma)}-1$ if, and only if, $\Gamma$ is a tree.
\end{prop}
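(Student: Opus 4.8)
The plan is to prove the two implications separately; the substance lies in showing that $\mc B_\Gamma$ is free of rank $|V(\Gamma)|-1$ when $\Gamma$ is a tree, and the converse is then a short consequence of the finiteness-properties machinery already recorded above.

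\emph{Suppose $\Gamma$ is a tree} (we may assume $|V(\Gamma)|\ge 2$, the one-vertex case being trivial). Since a tree contains no triangle, its flag complex $\Delta_\Gamma$ is the graph $\Gamma$ itself, which is contractible and in particular acyclic over $k$; hence $\mc B_\Gamma$ is of type FP$_\infty$ over $k$ by Theorem \ref{thm:BB_FPn}, and the cohomology formula recorded in the Corollary following Theorem \ref{thm:BB_FPn} gives $H^\bu(\mc B_\Gamma,k)\cong H^\bu(\li_\Gamma,k)/(\chi_\Gamma)$, the quotient by the ideal generated by the degree-$1$ class $\chi_\Gamma=\sum_{v}v$. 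Now $H^\bu(\li_\Gamma,k)$ is the exterior Stanley--Reisner algebra $\Lambda[V]/(x\wedge y:\{x,y\}\notin E)$, and as $\Gamma$ is triangle-free it is concentrated in cohomological degrees $0,1,2$, with $H^1=k^{V}$ and $H^2$ of dimension $|E|=|V|-1$, spanned by the classes $v\wedge w$ with $\{v,w\}\in E$. So everything reduces to computing the image of the multiplication map $H^1(\li_\Gamma,k)\to H^2(\li_\Gamma,k)$, $\alpha\mapsto\chi_\Gamma\wedge\alpha$. In the obvious bases this map is exactly the signed incidence matrix of $\Gamma$ (rows indexed by edges, columns by vertices), and for a connected graph that matrix has rank $|V|-1=|E|=\dim H^2(\li_\Gamma,k)$; hence the map is onto and $H^2(\mc B_\Gamma,k)=0$. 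Since higher cohomology vanishes as well, $\cd(\mc B_\Gamma)\le 1$, i.e. $H_2(\mc B_\Gamma,k)=0$; as $\mc B_\Gamma$ is a positively graded Lie algebra with finite-dimensional graded pieces, the vanishing of $H_2$ forces it to be free (a free cover on a homogeneous basis of $H_1$ has kernel a graded ideal $R$ lying in degrees $\ge 2$ with $R=[F,R]$, so $R=0$ by a minimal-degree argument). Finally $H_1(\mc B_\Gamma,k)$ is dual to $H^1(\mc B_\Gamma,k)$, which by the formula above is concentrated in internal degree $1$ and has dimension $|V|-1$; so $\mc B_\Gamma$ is free of rank $|V|-1$.

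\emph{Conversely}, suppose $\mc B_\Gamma$ is free of rank $|V(\Gamma)|-1$. Then $\mc B_\Gamma$ is finitely generated, hence of type FP$_1$, and (being free of finite rank) of type FP$_\infty$; by Theorem \ref{thm:BB_FPn} the flag complex $\Delta_\Gamma$ is acyclic over $k$, so $\Gamma$ is connected. Moreover $\Gamma$ is triangle-free: if $K_3$ were induced in $\Gamma$ then $\mc B_{K_3}=\li_{K_3}\cap\mc B_\Gamma$ would be a subalgebra of $\mc B_\Gamma$, but $\li_{K_3}$ is the abelian Lie algebra on three degree-$1$ generators with $\chi_{K_3}$ their sum, so $\mc B_{K_3}$ is $2$-dimensional abelian, which cannot occur inside a free Lie algebra over a field. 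Thus $\Delta_\Gamma=\Gamma$, and $\Gamma$ is a connected graph whose $1$-dimensional flag complex is acyclic, i.e. $\Gamma$ has no cycle; hence $\Gamma$ is a tree. The main obstacle is the first implication, and within it the assertion $H^2(\mc B_\Gamma,k)=0$, which I reduce to the rank of the incidence matrix of a connected graph; extracting the exact rank $|V|-1$ from $\dim H^1(\mc B_\Gamma,k)$ is minor bookkeeping with graded duality, and ``$H_2=0\Rightarrow$ free'' for positively graded Lie algebras is standard. (Alternatively the ``if'' direction can be deduced from the known group statement in \cite{ilirPavel} via $\gr B_\Gamma\cong\mc B_\Gamma(\Z)$ and base change, but the argument above is self-contained.)
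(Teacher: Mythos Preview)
Your argument is correct. It is closest in spirit to the paper's second (``geometric'') proof: both obtain freeness in the tree case by showing $b_2(\mc B_\Gamma)=0$ after invoking Theorem~\ref{thm:BB_FPn}. The paper reaches this via the numerical identity $b_2(\mc B_\Gamma)=b_2(\li_\Gamma)-b_1(\mc B_\Gamma)=|E|-(|V|-1)$ coming from the splitting $\li_\Gamma=\mc B_\Gamma\rtimes k$, whereas you unpack the same computation concretely through $H^\bu(\mc B_\Gamma,k)\cong H^\bu(\li_\Gamma,k)/(\chi_\Gamma)$ and identify multiplication by $\chi_\Gamma$ with the signed incidence matrix; this buys a pleasant, self-contained linear-algebra picture at the cost of a little extra writing. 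For the converse the paper's geometric proof simply reads off $|E|=|V|-1$ from the same Betti-number identity, while you instead rule out triangles by the abelian-subalgebra obstruction and then observe that an acyclic $1$-dimensional flag complex is a tree; both routes are short. The paper also gives a first, entirely different proof of the ``if'' direction by induction on leaves, realising $\li_T$ as an HNN extension $\hnn_\phi(\li_{T_0},v)$ and invoking a structural result on kernels in HNN extensions; your cohomological approach avoids that machinery.
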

\begin{proof}
	First assume that $\Gamma=T$ is a tree. 
	We argue by induction on the number of vertices of $T$. 
	The result clearly holds when $T$ consists of a single vertex, for ${\mc B}_{\{v\}}=0$.
	Suppose that $T$ has at least $2$ vertices. 
	Since $T$ is a finite tree, it contains a leaf $v$ and we set $e=\{v,w\}\in E(T)$ for the unique edge containing $v$.
	If $T_0$ is the induced subtree of $T$ spanned by the vertices $\neq v$ of $T$, then ${\mc B}_{T_0}$ is free and $\li_T=\hnn_\phi(\li_{T_0},v)$, where $\phi:\gen{w}\to \li_{T_0}$ is the zero derivation. Since ${\mc B}_{T_0}={\mc B}_T\cap \li_{T_0}$ and $w\notin {\mc B}_{T_0}$, it follows that ${\mc B}_T$ is free by \cite[Thm. 4]{hnnLS}.
	
	For the converse, suppose $\mc B_\Gamma$ is a free Lie algebra of the prescribed finite rank, and $\Gamma$ is not a tree. Since $\mc B_\Gamma$ is finitely generated, $\Gamma$ is connected, and hence it must contain an induced $n$-cycle for some $n\geq 3$. 
	
	If $n=3$, then $\cd\li_\Gamma \geq 3$. Now, notice that $\li_\Gamma=\mc B_\Gamma\rtimes k$, and hence, by \cite[Lem. 2.2]{sb_kosz}, $\cd\mc B_\Gamma\geq \cd\li_\Gamma-1\geq 2$, proving that $\mc B_\Gamma$ is not free. 
	
	Assume now $n\geq 4$ and let $\Delta$ be an induced $n$-cycle in $\Gamma$. By Theorem \ref{thm:BB_FPn}, $\mc B_\Delta$ is a standard subalgebra of $\li_\Delta$ but it is not of type FP$_2$, and hence it is not a free Lie algebra. Since $\mc B_\Delta=\mc B_\Gamma\cap \li_\Delta$, we deduce that $\mc B_\Gamma$ contains a non-free subalgebra, proving that $\mc B_\Gamma$ is not free either.
\end{proof}
\begin{proof}[Geometric proof]
	Let $\Gamma$ be a finite connected graph with $n$ vertices and $m$ edges. It is a tree precisely when $n=m+1$. 
	
	If $\Gamma$ is a tree, then $\mc B_\Gamma$ is finitely presented by Theorem \ref{thm:BB_FPn}. Since $\li_\Gamma=\mc B_\Gamma\rtimes k$, it follows that $b_2(\mc B_\Gamma)=b_2(\li_\Gamma)-b_1(\mc B_\Gamma)=m-(n-1)=0$, and hence $\mc B_\Gamma$ is free by \cite[Thm. 5.2]{sb}.
	
	Conversely, if $\mc B_\Gamma$ is free of rank $n-1$, then it is of type FP and $m=b_2(\li_\Gamma)=b_2(\mc B_\Gamma)+b_1(\mc B_\Gamma)=0+n-1$, i.e., $\Gamma$ is a tree.
\end{proof}

\begin{cor}
	A finite graph $\Gamma$ is a tree if, and only if, $B_\Gamma$ is a free group of finite rank.
	\begin{proof}
		If $B_\Gamma(\Z)$ is a free group of finite rank, then $\Gamma$ is connected by Theorem \ref{thm:BB_FPn}, and $\mc B_\Gamma\simeq \gr B_\Gamma$ is free by \cite[Thm. 6.1]{serrelie} (see also \cite{magnus}). As a consequence of Proposition \ref{prop:freeBBLie}, we deduce that $\Gamma$ is a tree.
		
		Conversely, if $\Gamma$ is a tree, then $\mc B_\Gamma(\Z)$ is a free Lie algebra and $\gr B_\Gamma\simeq \mc B_\Gamma(\Z)$. It follows from the May spectral sequence that $\cd B_\Gamma\leq \cd\mc B_\Gamma(\Z)=1$, and hence $B_\Gamma$ is a free group by the Stallings-Swan theorem. 
	\end{proof}
\end{cor}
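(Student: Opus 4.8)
The plan is to transfer freeness between the Bestvina--Brady group $B_\Gamma$ and its integral associated graded Lie ring $\gr B_\Gamma$. The bridge is the Lemma identifying $\gr B_\Gamma$ with the Lie ring $\mc B_\Gamma(\Z)$ whenever $\Gamma$ is connected. On the Lie side the relevant input is Proposition \ref{prop:freeBBLie}, and on the group side one uses that $B_\Gamma$ is finitely generated (Theorem \ref{thm:BB_FPn}, as soon as $\Gamma$ is connected), torsion-free (it embeds in the RAAG $G_\Gamma$), the May spectral sequence relating the cohomology of $B_\Gamma$ to that of $\mc B_\Gamma(\Z)$ --- which yields $\cd B_\Gamma \le \cd \mc B_\Gamma(\Z)$ --- and the Stallings--Swan theorem.

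For the forward implication, suppose $\Gamma = T$ is a finite tree. Since $T$ is connected, $\gr B_T \simeq \mc B_T(\Z)$. The leaf-induction argument proving Proposition \ref{prop:freeBBLie} --- at each leaf removal passing through the HNN decomposition $\li_T = \hnn_\phi(\li_{T_0},v)$ with zero derivation $\phi$ and the identity $\mc B_{T_0} = \mc B_T \cap \li_{T_0}$ --- carries over to $\Z$-coefficients, so $\mc B_T(\Z)$ is a free Lie ring and therefore has cohomological dimension at most $1$. The May spectral sequence then forces $\cd B_T \le 1$, and since $B_T$ is torsion-free the Stallings--Swan theorem makes it free; finite generation makes the rank finite.

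For the converse, suppose $B_\Gamma$ is free of finite rank. Being finitely generated, $B_\Gamma$ is of type FP$_1$, so $\Gamma$ is connected by Theorem \ref{thm:BB_FPn}; hence $\gr B_\Gamma \simeq \mc B_\Gamma(\Z)$. The associated graded Lie ring of a free group is a free Lie ring (Magnus--Witt), so $\mc B_\Gamma(\Z)$ is free; rationalising --- legitimate because $\mc B_\Gamma(\Z)$ is a graded $\Z$-module direct summand of the $\Z$-free Lie ring $\li_\Gamma(\Z)$, whence $\mc B_\Gamma(\Z)\otimes\Q \simeq \mc B_\Gamma(\Q)$ --- shows $\mc B_\Gamma(\Q)$ is a free Lie $\Q$-algebra, and Proposition \ref{prop:freeBBLie} forces $\Gamma$ to be a tree. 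The main obstacle is the forward implication: freeness of $\gr B_T$ does not by itself imply freeness of $B_T$, so the cohomological-dimension bound coming from the May spectral sequence, together with torsion-freeness and finite generation, is genuinely doing the work; a secondary point to check is that Proposition \ref{prop:freeBBLie}, stated over a field, really carries over to the integral Lie ring $\mc B_T(\Z)$.
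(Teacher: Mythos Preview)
Your argument is correct and follows essentially the same route as the paper's own proof: both directions pass through the identification $\gr B_\Gamma\simeq\mc B_\Gamma(\Z)$ for connected $\Gamma$, invoke Magnus--Witt for the free-group-to-free-Lie direction, and use the May spectral sequence plus Stallings--Swan for the tree-to-free-group direction. The only substantive addition on your side is the extra care about coefficients---noting that Proposition~\ref{prop:freeBBLie} is stated over a field and explaining why the leaf-induction still gives freeness of $\mc B_T(\Z)$, and why freeness of $\mc B_\Gamma(\Z)$ rationalises to freeness of $\mc B_\Gamma(\Q)$---points the paper glosses over; the torsion-freeness remark before Stallings--Swan is harmless but redundant, since $\cd\le 1$ already excludes torsion.
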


	\bibliography{mybibtex.bib}
	
\end{document}